\newtheorem{thm}{Theorem}[section]
\newtheorem{defn}[thm]{Definition}
\newtheorem{cor}[thm]{Corollary}
\newtheorem{ex}[thm]{Example}
\newtheorem{prop}[thm]{Proposition}
\newtheorem{lemma}[thm]{Lemma}
\newcommand{\bZ}{\mathbb{Z}}
\newcommand{\bN}{\mathbb{N}}
\newcommand{\bC}{\mathbb{C}}
\newcommand{\Dfn}[1]{\emph{\color{blue}#1}}
\newcommand{\oeis}[1]{\href{https://oeis.org/#1}{#1}}
\newcommand{\bt}{\mathcal{B}}
\newcommand{\ba}{\mathbf{a}}
\newcommand{\ie}{{\it i.e.}}
\newcommand{\GL}{\mathrm{GL}}
\newcommand{\SL}{\mathrm{SL}}
\title[On some sequences associated to invariant theory]{On some combinatorial sequences \\  associated to invariant theory}
\author[A.~Bostan]{Alin Bostan}
\address{Inria, Universit{\'e} Paris-Saclay, 1 rue Honor{\'e}
d'Estienne d'Orves, 91120 Palaiseau, France}
\thanks{A. Bostan was supported in part by {\href{https://specfun.inria.fr/chyzak/DeRerumNatura/}{DeRerumNatura}} ANR-19-CE40-0018.}
\email{Alin.Bostan@inria.fr}
\author[{J.~Tirrell}]{Jordan Tirrell}
\address{Department of
Mathematics and Computer Science, Washington College, USA}
\email{jtirrell2@washcoll.edu}
\author[{B.~W.~Westbury}]{Bruce W. Westbury}%
\email{Bruce.Westbury@gmail.com}%
\author[Y.~Zhang]{Yi Zhang}
\address{Department of Foundational  Mathematics, School of Science, Xi'an Jiaotong-Liverpool University,
 Suzhou, 215123, China}
\thanks{The work of Y. Zhang was supported by XJTLU Research Development Funding No.\ RDF-20-01-12, the NSFC Young Scientist Fund No.\ 12101506 and the Natural Science Foundation of the Jiangsu Higher Education Institutions of China No.\ 21KJB110032. }
\email{Yi.Zhang03@xjtlu.edu.cn}
\date{\today}
\begin{document}
	\keywords{representation theory, Lie algebra, binomial transform, algebraic
residues, computer algebra, creative telescoping}
	\begin{abstract}
We study the enumerative and analytic properties of 
some sequences constructed using tensor invariant theory.
The octant sequences are constructed 
from the exceptional Lie group~$G_2$ 
and the quadrant sequences from the special linear group~$SL(3)$. 
In each case we show that the corresponding sequences are related by binomial
transforms. The first three octant sequences and the first four quadrant sequences are listed in the On-Line Encyclopedia of Integer Sequences (OEIS).
These sequences all have interpretations as enumerating two-dimensional lattice walks but for
the octant sequences the boundary conditions are unconventional. These sequences are all P-recursive 
and we give the corresponding recurrence relations. 
In all cases the associated 
differential operators are of third order and have the remarkable property that they can be solved
to give closed formulae for the ordinary generating functions in terms of classical Gaussian hypergeometric functions.
Moreover, we show that the octant sequences and the quadrant sequences are related by the branching rules for the inclusion
of~$SL(3)$ in $G_2$.
	\end{abstract}
	\maketitle
	\tableofcontents

\section{Introduction} 
We study two families of sequences, each of them constructed using tensor invariant theory.
The theory is summarised in \S\ref{SEC:inv}.
The first family contains the \emph{octant sequences}. The first octant sequence is constructed from the seven-dimensional
fundamental representation of the exceptional simple algebraic group $G_2$.
This sequence was studied initially by Kuperberg in~\cite{Kuperberg1997}
and subsequently by Westbury in~\cite{Westbury2005}. We will refer to this sequence as $T_3$.
The second family contains the \emph{quadrant
sequences}. The first quadrant sequence is constructed from the direct sum of the three-dimensional vector
representation of $SL(3)$ and its dual. The sequences are defined as the dimension of the subspace of
invariant tensors in the tensor powers of the representation. In both cases we extend to a family of sequences
by adding copies of the trivial representation to the initial representation. Combinatorially, it means that
each family consists of the iterated binomial transforms of the first sequence.

The first three sequences of the octant family have entries in the
On-Line Encyclopedia of Integer Sequences (OEIS).
They are displayed in Figure~\ref{fig:seq}.
The sequence \oeis{A059710} is the first octant sequence $T_3$.
\begin{figure}[h]\label{fig:seq}
\begin{tabular}{c|rrrrrrrrrr}
$\ba$  
& 0 & 1 & 2 & 3 & 4 & 5 & 6 & 7 & 8 & 9 \\ \hline
\oeis{A059710} & 1 & 0 & 1 & 1 & 4 & 10 & 35 & 120 & 455 & 1792 \\
\oeis{A108307} & 1 & 1 & 2 & 5 & 15 & 51 & 191 & 772 & 3320 & 15032 \\
\oeis{A108304} & 1 & 2 & 5 & 15 & 52 & 202 & 859 & 3930 & 19095 & 97566
\end{tabular}
\caption{The first sequences of the family of octant sequences, 
their \href{https://oeis.org}{OEIS} tags
and their first ten terms.}
\end{figure}

To our knowledge, the sequence $T_3$ has not appeared previously in the combinatorics literature.
In \S\ref{sec:octant}
we prove the following combinatorial interpretations of this sequence.
\begin{thm} \label{THM:firstoctantcomb}
The sequence $T_3$ enumerates:
\begin{itemize}
\item hesitating tableaux of height 2, empty shape, and no singleton;
\item set partitions with no singleton and no enhanced 3-crossing;
\item sequences $(x_1,x_2,\dotsc ,x_n)$ such that $1\leqslant x_i< i$ with no weakly decreasing subsequence of length 3.
\end{itemize}
\end{thm}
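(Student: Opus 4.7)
The plan is to link the three combinatorial objects in the theorem to the representation-theoretic quantity $T_3(n) = \dim(V^{\otimes n})^{G_2}$ via a chain of bijections, where $V$ denotes the seven-dimensional representation of~$G_2$.

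Step~1 (hesitating tableaux). By the standard correspondence between invariant tensors and walks in the Grothendieck ring, $T_3(n)$ counts closed walks of length~$n$ starting and ending at the trivial representation, with transitions given by the decomposition of~$V_\lambda \otimes V$ into irreducibles. Since $G_2$ has rank~$2$, dominant weights~$\lambda$ correspond to partitions of height at most~$2$, so a walk becomes a sequence of such partitions. The structure of the tensor-product decomposition of~$V_\lambda \otimes V$ for~$G_2$ (with its seven weights) matches the three double-step types of a hesitating tableau; translating the walk into this tableau language yields hesitating tableaux of length~$2n$ with empty initial and final shape and height at most~$2$. The ``no singleton'' condition excludes the degenerate type-(iii) double-step in which the same box is added and then removed, corresponding to one specific trivial branch of the tensor decomposition.

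Step~2 (set partitions). I would invoke the bijection of Chen, Deng, Du, Stanley and Yan between hesitating tableaux of length~$2n$ with empty initial and final shape and height at most~$k$, and set partitions of~$[n]$ with no enhanced $(k+1)$-crossing. Applied with $k=2$, this matches the hesitating tableaux of Step~1 to set partitions avoiding enhanced $3$-crossings. Direct inspection of the CDDSY construction (which encodes arc openings and closings through the double-steps of the tableau) identifies the degenerate type-(iii) step at position~$i$ with the singleton block~$\{i\}$, so the no-singleton conditions correspond on the two sides.

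Step~3 (sequences). I would construct a bijection between set partitions of~$[n]$ with no singleton and no enhanced $3$-crossing and the prescribed sequences. The construction traverses $1, 2, \dotsc, n$ in order and records, for each position~$i$ that attaches to a previously opened arc, an index $x_i \in \{1, \dotsc, i-1\}$ pointing to that arc; the no-singleton hypothesis ensures $x_i$ is well-defined in the required range at each stage. The crux is that a weakly decreasing length-$3$ subsequence in $(x_i)$ must correspond exactly to an enhanced $3$-crossing of the partition. This pattern-equivalence is the main obstacle; I expect to verify it via a growth-diagram argument in the spirit of Krattenthaler, or by leveraging the CDDSY symmetry between enhanced crossings and enhanced nestings to translate one forbidden configuration into the other. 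Steps~1 and~2 are essentially standard once the $G_2$ branching is unpacked and the CDDSY bijection is invoked; the substantive work lies in Step~3.
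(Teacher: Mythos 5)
Your Steps 1 and 2 follow essentially the paper's route: the $G_2$ invariant count is realised as a lattice walk in the dominant chamber with seven steps, the walk is matched with hesitating tableaux of height~2 (Theorem~\ref{THM:hes}, stated for the crystal $C\coprod\ast$, i.e.\ for the binomial transform $E_3$), and the set-partition interpretation is imported from the known correspondence between hesitating tableaux of height $h$ and empty shape and partitions with no enhanced $(h+1)$-crossing. One caution on Step~1: there are \emph{two} degenerate ``add a cell, remove a cell'' double-steps (same box on row~1, same box on row~2), and only the row-1 one is excluded by the no-singleton condition; the row-2 one survives as the genuine zero step of the $G_2$ walk and carries a boundary condition (it is forbidden when the two rows are equal) that must be checked against the crystal rule forbidding the zero step at weights $(0,k)$. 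That matching of boundary conditions is the actual content of the proof of Theorem~\ref{thm:bt}, so saying the two structures ``match'' is not enough --- you need to say which of the two trivial branches is being removed and verify the constraint on the other.

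The genuine gap is Step~3. The encoding you describe is incomplete: you only assign $x_i$ to positions that ``attach to a previously opened arc,'' so block-minimal elements receive no value, and you give no argument that the map is injective or that its image is exactly the stated set of sequences. More seriously, the equivalence between ``no weakly decreasing subsequence of length~3'' and ``no enhanced 3-crossing'' is not a configuration-by-configuration correspondence under a naive arc-pointer encoding; it is a nontrivial theorem (proved by Lin, and bijectively by Yan), and your proposal defers exactly this point to an unspecified growth-diagram or crossing/nesting-symmetry argument. The paper sidesteps the issue by citing Yan's bijection between \emph{all} enhanced-3-noncrossing partitions of $[n]$ and \emph{all} inversion sequences $1\leqslant x_i\leqslant i$ with no weakly decreasing subsequence of length~3, and then observing that this bijection sends singleton blocks $\{i\}$ precisely to positions with $x_i=i$; restricting both sides to the singleton-free objects yields the third bullet with no new work. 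Unless you intend to reprove Yan's theorem, you should present the third interpretation the same way: as the singleton-preserving restriction of a known bijection, which reduces it to the second bullet.
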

These all follow from known combinatorial interpretations of the binomial transform of $T_3$;
they are corollaries to Theorem~\ref{THM:hes} in \S\ref{sec:octant}.

The starting point for Theorem~\ref{THM:firstoctantcomb} is the interpretation of the sequences in the first family
as lattice walks in the plane. This interpretation is used in \cite{Westbury2005}. These lattice walks have six
nonzero steps and one zero step corresponding to the seven weights of the representation. These walks are restricted
to the region $0\leqslant y\leqslant x$. There is an additional constraint which is unusual from the standard
theory of lattice walks, namely, that the zero step is forbidden on the line $x=y$. 

Let $E_3$ be the second octant sequence \oeis{A108307} in Figure~\ref{fig:seq}. Using this interpretation, we have the following theorem.

\begin{thm}\label{thm:bt} The sequence $E_3$ is the binomial transform of
the sequence $T_3$, \ie, for $n \geq 0$,
\[ E_3(n) = \sum_{i=0}^n \binom{n}{i}\,T_3(i). \]
\end{thm}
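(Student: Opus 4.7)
The plan is to give a short representation-theoretic argument that essentially unwinds the definitions from \S\ref{SEC:inv}. Write $V$ for the $7$-dimensional fundamental $G_2$-representation, so that $T_3(n)=\dim\Hom_{G_2}(\bC,V^{\otimes n})$ by definition. The introduction states that the octant family is obtained by iterating the operation ``add a copy of the trivial representation,'' so $E_3$ is the invariant-dimension sequence attached to $V\oplus\bC$, \ie\ $E_3(n)=\dim\Hom_{G_2}(\bC,(V\oplus\bC)^{\otimes n})$.

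The key step is to expand $(V\oplus\bC)^{\otimes n}$ using distributivity of $\otimes$ over $\oplus$. The expansion has one summand $W_1\otimes\cdots\otimes W_n$ for each sequence $(W_1,\dots,W_n)\in\{V,\bC\}^n$. Collecting summands with the same number $i=|\{j:W_j=V\}|$ of $V$-factors, and using $U\otimes\bC\cong U$ together with the $G_2$-equivariant symmetry $U\otimes U'\cong U'\otimes U$ (permissible because $\Hom_{G_2}(\bC,-)$ depends only on the isomorphism type of the representation), we obtain
\[ (V\oplus\bC)^{\otimes n}\;\cong\;\bigoplus_{i=0}^n\binom{n}{i}V^{\otimes i} \]
as $G_2$-representations. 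Applying $\Hom_{G_2}(\bC,-)$ and taking dimensions, both of which are additive on finite direct sums, yields exactly the claimed binomial transform identity.

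There is no serious obstacle here; the only points that deserve attention are (a) explicitly tying the construction of the $k$-th octant sequence in \S\ref{SEC:inv} to the representation $V\oplus\bC^{\oplus k}$, and (b) the use of the symmetry isomorphism to collect the $2^n$ summands into $n+1$ multiplicity classes. Both are standard and can be dispatched in a sentence. As a sanity check and an alternative argument for readers who prefer combinatorics, one can instead invoke the lattice-walk description used in \cite{Westbury2005}: an $E_3$-walk of length $n$ is obtained from a $T_3$-walk of some length $i$ by inserting $n-i$ ``inert'' stay-in-place steps, corresponding to the extra trivial summand, in any of the $\binom{n}{i}$ position subsets, and summing over $i$ recovers the binomial transform.
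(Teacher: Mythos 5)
Your argument has a genuine gap: it proves the wrong statement. In this paper $E_3$ is not \emph{defined} as the invariant-dimension sequence of $V\oplus\bC$; it is the OEIS sequence \oeis{A108307}, i.e.\ the number of set partitions of $[n]$ with no enhanced $3$-crossing (equivalently, hesitating tableaux of height~2 and empty shape) --- see Proposition~\ref{main-thm-H}, which is quoted from Bousquet-M\'elou and Xin with exactly that definition. The entire content of Theorem~\ref{thm:bt} is the identification of this combinatorially defined sequence with the binomial transform of $T_3$; that is why the paper calls the result ``an unexpected connection between the invariant theory of $G_2$ and the combinatorics of set partitions.'' Your main argument re-derives Lemma~\ref{LEM:binomialrep} (the binomial expansion of $(V\oplus\bC)^{\otimes n}$, which is correct but already in the paper) after \emph{assuming} that $E_3(n)=\dim\Hom_{G_2}(\bC,(V\oplus\bC)^{\otimes n})$ --- and that assumption is precisely what has to be proved. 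Citing the introduction's description of the octant family to justify it is circular, since the introduction's claim that the second octant sequence equals \oeis{A108307} \emph{is} the theorem.

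Your ``sanity check'' paragraph is much closer to the paper's actual proof, but as written it again assumes the conclusion: you assert that an $E_3$-walk is a $T_3$-walk with inert steps inserted, without checking that the two walk models really match. The paper compares them explicitly. The $G_2$ model (highest weight words in $\otimes^n C$) is a walk in the dominant chamber with the seven steps of Figure~\ref{fig:stepswt}, subject to the unusual boundary condition that the zero step is forbidden at weights $(0,k)$. The hesitating-tableau model for \oeis{A108307} is a walk in $\{(x,y): x\geqslant y\geqslant 0\}$ with the eight steps of Figure~\ref{fig:stepswk}, of which \emph{two} are zero steps: one (add then remove a cell in the second row) forbidden on the line $x=y$, and one (add then remove a cell in the first row) always allowed. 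Under the change of coordinates $(x,y)\mapsto(x+y,x)$ the six nonzero steps, the two domains, and the \emph{restricted} zero steps all correspond; the surplus is exactly one unrestricted zero step, and Lemma~\ref{LEM:binomialwalks} then yields the binomial transform. The delicate point your sketch omits is this matching of boundary conditions on the zero steps --- it is the step that makes the theorem both true and non-trivial.
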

This result provides an unexpected connection between the invariant theory
of $G_2$ and the combinatorics of set partitions. 

In \S\ref{SUBSEC:algebra} we study the ordinary generating functions of these sequences. First we derive
the following recurrence relation for $T_3$. 

\begin{thm}\label{thm:rc}
The sequence $T_3$ is determined by the initial conditions
$ T_3\left( 0 \right)=1$, $T_3\left( 1 \right)=0$, $T_3\left( 2 \right)=1$ and the recurrence relation 
\begin{multline}
14\left( n+1 \right)  \left( n+2 \right) T_3 \left( n \right) +
 \left( n+2 \right) \left( 19n+75 \right)T_3 \left( n+1 \right) \\
+2 \left( n+2 \right) \left( 2n+11 \right) T_3\left( n+2 \right)
  - \left( n+8
 \right)  \left( n+9 \right) T_3\left( n+3 \right)=0
\end{multline}
satisfied for all $n \geq 0$.
\end{thm}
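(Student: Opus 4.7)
My plan is to produce the recurrence by first expressing $T_3(n)$ as an algebraic residue (equivalently, a constant term of a family of Laurent polynomials) and then running creative telescoping to extract the annihilating operator. Starting from the definition $T_3(n)=\dim(V^{\otimes n})^{G_2}$ for $V$ the seven-dimensional fundamental representation of $G_2$, I would apply Weyl's integration formula to write
\[
T_3(n) \;=\; \frac{1}{|W|}\int_{T}\chi_V(t)^n\,|\Delta_W(t)|^2\,dt,
\]
where $T$ is the two-dimensional maximal torus, $W$ is the Weyl group of $G_2$ (of order $12$), $\chi_V$ is the character, and $\Delta_W$ is the Weyl denominator. In toric coordinates $(x,y)$ both $\chi_V$ and $|\Delta_W|^2$ are explicit Laurent polynomials, so the integral collapses to a double constant term
\[
T_3(n) \;=\; [x^0 y^0]\; K(x,y)\cdot P(x,y)^n
\]
for fixed Laurent polynomials $K$ and $P$ read off from the weights of $V$ and the roots of $G_2$.

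The second step is to feed this representation to a multivariate creative-telescoping algorithm (Chyzak's algorithm, the Apagodu--Zeilberger approach, or a reduction-based method for algebraic residues). The output is a linear recurrence in $n$ with polynomial coefficients that annihilates the sequence; after clearing common factors and normalising the leading coefficient, this should coincide with the order-three relation displayed in the theorem. Correctness is then verified symbolically and cross-checked against the first terms $1,0,1,1,4,10,35,120,\ldots$ of \oeis{A059710}. The initial values $T_3(0)=1$, $T_3(1)=0$, $T_3(2)=1$ are immediate from invariant theory: $V^{\otimes 0}=\bK$ contributes one invariant, the irreducible nontrivial $V$ contributes none, and $V\otimes V$ contributes the single invariant spanned by the $G_2$-invariant bilinear form on $V$.

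The main obstacle will be keeping the constant-term representation small enough that creative telescoping terminates in reasonable time and returns the minimal-order annihilator. For $G_2$ the Weyl denominator has twelve factors, which can substantially inflate intermediate expressions during the telescoping search, and some experimentation with alternative parameterisations (for example, exploiting the restriction of the $G_2$ character to an $SL(3)$ subgroup, as hinted at by the branching rules mentioned in the abstract) may be needed. Should the direct approach prove unwieldy, a viable alternative is to route through Theorem~\ref{thm:bt}: since $E_3$ is the binomial transform of $T_3$, any recurrence for $E_3$ (obtainable via quadrant lattice-walk techniques, or via RSK-style bijections for set partitions avoiding enhanced $3$-crossings) transfers to one for $T_3$ by a routine change of basis in the Ore algebra of recurrence operators, corresponding to multiplication of the exponential generating function by $e^{-x}$.
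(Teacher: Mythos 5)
Your primary route is essentially the paper's third proof, and your fallback is its first and second proofs, so the overall strategy is sound. The paper's direct proof starts exactly as you propose: $T_3(n)$ is written as the constant term $[x^0y^0]\,\Delta\,K^n$ with $K=1+x+y+xy+x^{-1}+y^{-1}+(xy)^{-1}$ and an explicit Laurent polynomial $\Delta$ (this is Kuperberg's formula; your Weyl-integration version with $|\Delta_W|^2/|W|$ is an equivalent normalisation), so that $\mathcal T(t)$ is the algebraic residue of $\Delta/(xy-txyK)$, and then a rational-integration/creative-telescoping algorithm (Griffiths--Dwork) is applied. The fallback you describe --- transporting the Bousquet-M\'elou--Xin second-order recurrence for $E_3$ through the inverse binomial transform, either by telescoping the sum $\sum_k(-1)^{n-k}\binom nk E_3(k)$ or by the substitution $\mathcal T(t)=\frac1{1+t}\mathcal E\bigl(\frac t{1+t}\bigr)$ and D-finite closure properties --- is precisely what the paper does in its first two proofs.

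The one step you gloss over is the descent from the telescoper's output to the stated order-three recurrence, and this is where the paper has to do genuine work. The integration algorithm does not return the minimal annihilator: it produces an operator $L_6$ of order six, which factors as $L_6=QL_3$ with $L_3$ of order three. One cannot conclude that $L_3$ annihilates $\mathcal T$ merely by ``clearing common factors'' or by checking that the first few terms of $T_3$ satisfy the order-three recurrence; instead one sets $f:=L_3(\mathcal T)$, notes that $Qf=0$ forces the coefficient recurrence $2(n+2)f_n+(n+6)f_{n+1}=0$ whose leading coefficient never vanishes, and then observes that $f_0=0$ (from $\mathcal T=1+t^2+O(t^3)$) forces $f=0$. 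You flag non-minimality as an ``obstacle,'' but your proposal as written would leave the theorem unproved if (as actually happens) the telescoper returns a proper left multiple of the desired operator; you need this right-factor-plus-initial-conditions argument, or else the binomial-transform route, where Chyzak's algorithm happens to return the order-three recurrence directly.
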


The recurrence relation in Theorem~\ref{thm:rc} was conjectured by Mihailovs~\cite[\S3]{Westbury2005}
and we give
three independent proofs in \S\ref{SUBSEC:algebra}. The corresponding linear differential equation, 
satisfied by the generating function $\mathcal T(t) = \sum_{n \geq 0} T_3(n) t^n$,
can
be solved to give a closed formula for $\mathcal T(t)$ in terms of the Gaussian hypergeometric function
${}_2F_1$. The differential equations and recurrence relations for the other two sequences in 
Figure~\ref{fig:seq} are derived in \cite{Bousquet-Melou2005} and closed formulae are given in the OEIS.

\medskip 
The first four sequences of the quadrant sequences also have entries in the OEIS; they are shown in Figure~\ref{fig:bx}.
The most well-known of these sequences is the third one, which enumerates \emph{Baxter permutations}~\cite{MR184217}.

These sequences also have interpretations as lattice walks. These walks are restricted to the quadrant $0\leqslant x,y$.
These are walks in six weights of the representation. This interpretation follows from the Pieri rule.
For $\GL(3)$, representations are indexed by partitions with three rows. Then applying the Pieri rule gives:
\begin{equation*}
    V_{[1]}\otimes V_\lambda \cong \oplus_\mu V_\mu
    \qquad
    V_{[1,1]}\otimes V_\lambda \cong \oplus_\nu V_\nu
\end{equation*}
where the first sum is over all partitions $\mu$ obtained by adding a single box to $\lambda$
and the second sum is over all partitions $\nu$ obtained by adding two boxes in different rows to $\lambda$.
Then the restriction to $\SL(3)$ is given by mapping the partition $[\lambda_1,\lambda_2,\lambda_3]$ to the
lattice point in the plane $(\lambda_1-\lambda_2,\lambda_2-\lambda_3)$.

The first sequence is studied in \cite{MR2681853}, where it is defined in terms of lattice walks.
The identification of the third sequence with the sequence enumerating Baxter permutations can be proved by several methods.
One method is to show that they both satisfy the same recurrence relation and initial conditions. Another method is to prove that they
both enumerate axis walks in the octant, using \cite{MR3738145}, \cite{Burrill2015}, \cite{MR3470878} or our derivation utilizing branching rules.
The fourth sequence was introduced by Marberg in \cite{Marberg2012}. It is defined as the number of noncrossing 2-coloured set partitions;
a lattice path interpretation is given in \cite{Marberg2012}, where a differential equation and a recurrence relation can also be found.
The second sequence does not have a published reference, but it is known to enumerate a family of unlabelled graphs called \emph{planar st-graphs}~\cite{Cranch2015}. 
We identify this sequence with the OEIS entry by giving a
combinatorial interpretation and by showing that the binomial transform is the third sequence in \S\ref{SEC:quadrant}. 
Moreover,  the second and fourth sequences are both given by axis walks
in the corresponding octant.

\begin{figure}[h]
    \centering
\begin{tabular}{c|rrrrrrrrr}
$\ba$ 
& 0 & 1 & 2 & 3 & 4 & 5 & 6 & 7 & 8 \\ \hline
\oeis{A151366} & 1 & 0 & 2 & 2 & 12 & 30 &130 & 462 & 1946 \\ 
\oeis{A236408} & 1 & 1 & 3 & 9 & 33 & 131 & 561 & 2535 & 11971 \\ 
\oeis{A001181} & 1 & 2 & 6 & 22 & 92 & 422 & 2074 & 10754 & 58202\\
\oeis{A216947} & 1 & 3 & 11 & 47 & 225 & 1173 &6529 & 38265 & 233795
\end{tabular}
\caption{The first four quadrant sequences, 
their \href{https://oeis.org}{OEIS} tags
and their first nine terms.}
\label{fig:bx}
\end{figure}

The ordinary generating functions for these sequences are studied in~\S\ref{SUBSEC:rec}. We give a recurrence relation 
for the $k$-th quadrant sequence,
which includes $k$ as a parameter. A closed formula for the generating function of the Baxter sequence is given in the OEIS without a reference. We give a different, but equivalent, closed formula.

These two families of (octant and quadrant) sequences are related since $SL(3)$ is a maximal subgroup of $G_2$.
Furthermore, the restriction of the seven-dimensional fundamental representation is the direct sum of the
two three-dimensional representations and one copy of the trivial representation. More generally, the restriction
of the representation for the $k$-th sequence in octant family is the $(k+1)$-st sequence in the quadrant family.
It means that these two sequences are related by the branching rules for the inclusion of $SL(3)$ in $G_2$.
It has been observed several times that the Baxter numbers enumerate walks which end on an axis
and bijections which establish this are given in \cite[Proposition~23]{Burrill2015}, 
\cite[Theorem~3]{MR3738145}, \cite[Theorem~1.2]{Yan2020}. In the last section we give an interpretation of this
result using representation theory. It gives a new proof that these sets are equinumerous but it is not
bijective. However, this proof does extend the result. First, the analogous result relates the $k$-th quadrant sequence
and the $k$-th octant sequence. It also follows combinatorially from the known result using the binomial
transforms. Second, our proof also extends this result by giving an analogous result for walks ending at a point
other than the origin.


This is an expanded version of \cite{BTWZ19}. The abbreviated version was used in \cite{Scherer21} to find
the radius of convergence of sequence \oeis{A060049} and proving \cite[Conjecture~8.2]{Kuperberg1997}.

\section{Invariant theory}\label{SEC:inv}
In this section we describe the way in which the representation theory of algebraic groups gives rise to combinatorially interesting
sequences. For the basic notions in the theory of linear algebraic groups, the reader is invited to consult the book~\cite{FuHa91}. 

Let $G$ be a reductive complex algebraic group, i.e., $G$ has a trivial unipotent radical.
Let $V_\lambda$ be the irreducible representation of $G$ with highest weight $\lambda$.
\begin{defn} \label{DEF:seq}
Let~$V$ be a (finite-dimensional) representation of~$G$ and $\lambda$ a dominant weight.
The {\emph sequence associated to $(G,V,\lambda)$},
denoted~$\ba_{V,\lambda}$, is the sequence whose $n$-th term is the multiplicity
of $V_\lambda$ in the tensor power $\otimes^nV$. 
\end{defn}

Our main interest is in the case $\lambda=0$. In this case $V_\lambda$ is the trivial representation
and the $n$-th term of the sequence $\ba_{V,\lambda}$ is the dimension of the subspace of
invariant tensors in $\otimes^nV$. The sequence $\ba_{V,0}$ is denoted by $\ba_{V}$.

\begin{ex}\label{ex:A1}
The simplest example is to take $G=SL(2)$ and $V$ to be the two-dimensional defining representation.
The odd terms of the sequence $\ba_V$ are zero and the even terms are the Catalan numbers~$C_n = \frac{1}{n+1} \binom{2n}{n}$.
\end{ex}

It is very classical that the generating function for the sequence of Catalan numbers is \emph{algebraic}.
In our context, this can be seen as a special case
of the following general result.

\begin{prop} 
Take $G=SL(2)$ and $V$ to be any representation. Then the generating
function of the sequence $\ba_V$ is algebraic.
\end{prop}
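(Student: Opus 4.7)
The plan is to write $F(t) := \sum_{n \ge 0} \ba_V(n)\,t^n$ as the constant term in $z$ of a bivariate rational function, and then invoke the classical fact (Furstenberg, 1967) that constant terms of bivariate rational functions are algebraic.

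First I would pass to characters. Decompose $V = \bigoplus_{m\ge 0} V_m^{\oplus c_m}$ into irreducibles of $SL(2)$; on the maximal torus the character $\chi_V(z)$ is a symmetric Laurent polynomial in $z$. For any finite-dimensional representation $W$ with character $\chi_W = \sum_m a_m \chi_{V_m}$, a direct expansion of $\chi_{V_m}(z) = z^m + z^{m-2}+\dots+z^{-m}$ shows $\dim W^{SL(2)} = a_0 = [z^0]\chi_W - [z^2]\chi_W$. Applied to $W = V^{\otimes n}$, whose character is $\chi_V^n$, and combined with the palindromic symmetry $\chi_V(z) = \chi_V(z^{-1})$, this gives
$$\ba_V(n) = [z^0]\bigl((1 - z^{-2})\,\chi_V(z)^n\bigr).$$

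Summing over $n$ and exchanging with $[z^0]$ at the level of formal power series in $t$ yields
$$F(t) = [z^0]\,\frac{1 - z^{-2}}{1 - t\,\chi_V(z)},$$
which, for $t$ in a small enough neighbourhood of the origin, is represented by the contour integral
$$F(t) = \frac{1}{2\pi i} \oint_{|z|=1} \frac{1 - z^{-2}}{1 - t\,\chi_V(z)}\,\frac{dz}{z}.$$
After clearing negative powers of $z$ the integrand is a genuine rational function of $(t,z)$, so the integral equals the sum of residues inside $|z|=1$; these poles are algebraic functions of $t$ (roots of the polynomial obtained by clearing denominators in $1 - t\chi_V(z)$), and summing over a Galois-stable family of branches produces an algebraic function of $t$. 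Equivalently, $F(t)$ is a constant term of a bivariate rational function and hence algebraic by Furstenberg's theorem.

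The main obstacle is essentially analytic bookkeeping: one must verify that the poles of $1 - t\,\chi_V(z)$ stay off the unit circle for $t$ near $0$, so that the contour integral genuinely represents the power series $F(t)$, and keep track of which branches contribute as $t$ varies. The algebraicity input itself is classical and requires no further work. As a sanity check, Example~\ref{ex:A1} with $V = V_1$ gives $\chi_V(z) = z + z^{-1}$ and one recovers $F(t) = \tfrac{1 - \sqrt{1 - 4t^2}}{2t^2}$, as expected.
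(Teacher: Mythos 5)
Your proof is correct and follows essentially the same route as the paper: extract $\dim (V^{\otimes n})^{SL(2)}$ as $[z^0]\bigl((1-z^{-2})\chi_V(z)^n\bigr)$ using the Weyl-character computation, recognize the generating function as a constant term (equivalently, a diagonal) of a bivariate rational function, and conclude algebraicity by the P\'olya--Furstenberg theorem. The residue/contour discussion is just the standard proof of that classical input and adds nothing that needs checking.
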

\begin{proof}
The character $\chi(V)$ of $V$ is a Laurent 
polynomial in $x$ invariant under $x\leftrightarrow x^{-1}$. Then the $n$-th term of the sequence~$\ba_V$
is the constant term of the Laurent polynomial $\chi(V)^n(1-x^2)$. 
Equivalently, 
the generating function of the sequence $a_V$
is the diagonal of the bivariate rational function
$\frac{x-x^{-1}}{1-tx \,\chi(V)}$. 
The result now follows by applying the Pólya-Furstenberg theorem~\cite{Polya22,Furstenberg67}, see also~\cite[Theorem~6.3.3]{Stanley1999}.
\end{proof}

Next we discuss the case where $V$ is a fundamental representation of a classical simple Lie algebra
of rank two. These sequences have all been studied and are known to be \emph{P-recursive}, or \emph{holonomic}: they satisfy linear recurrences with polynomial coefficients (in the index~$n$).

\begin{ex}
Take $G=SL(3)$ and $V$ to be the three-dimensional defining representation.
In the sequence $\ba_V$ every third term is non-zero and the other terms are zero.
Taking every third term gives the sequence of three-dimensional Catalan numbers (\oeis{A005789}), whose $n$-th term is given by $2 \, (3n)!/(n! \, (n+1)! \, (n+2)!)$
and matches the number
of standard tableaux of rectangular shape $(n,n,n)$.

This can be seen as follows. Let $V$ be the vector representation of~$GL(n)$. Then Schur-Weyl duality gives the decomposition
of $\otimes^r V$ as $\oplus_\lambda S_\lambda\otimes V_\lambda$ where $\lambda$ is a partition of size $r$
with at most $n$ non-zero parts and $S_\lambda$ is an irreducible representation of the symmetric group
on~$r$ points. There are several constructions (due to Young) of $S_\lambda$,  which take the vector
space with basis the set of standard tableaux of shape $\lambda$ and give an action of the symmetric group.
Now restrict from $GL(n)$ to $SL(n)$. The restriction of the representation $V_\lambda$ is trivial if~$\lambda$ has rectangular shape and is non-trivial otherwise.
\end{ex}

\begin{ex}
Take $G=Sp(4)$ of type $C_2$ and $V$ to be the four-dimensional defining representation.
Under the isomorphism~$Sp(4)\cong Spin(5)$, we see that $V$ is the spin representation of the spin group $Spin(5)$.
The odd terms in the sequence $\ba_V$ are zero and the even terms are those of the sequence 
$C_n C_{n+2} - C_{n+1}^2$ (\oeis{A005700}).
This sequence counts 3-noncrossing perfect matchings on $2n$ points 
and also  nested pairs of Dyck paths of length $2n$.
\end{ex}

The theory of Kashiwara crystals (see~\cite[Ch.~2]{Malle2011}, or \cite[Ch.~4]{Hong2002})
gives combinatorial interpretations of these sequences.
There is a crystal associated to each representation. The crystal associated to
the irreducible representation $V_\lambda$ is denoted by $C_\lambda$. These are the
connected crystals. Crystals have a tensor product with the property that the
crystal associated to the tensor product of representations is the tensor product
of the crystals associated to the representations. The trivial crystal is $C_0$
and has one element. Moreover, the crystal associated to the direct sum of representations
is the disjoint union of the crystals associated to the representations.

Let $C$ be a crystal. The elements of $\otimes^nC$ are words of length $n$ in the elements of $C$.
The analogue of the subspace of invariant tensors is the subset of invariant words in $\otimes^nC$.

The crystal version of Definition~\ref{DEF:seq} is:
\begin{defn} \label{DEF:crystal}
Let~$C$ be a (finite) crystal of~$G$ and $\lambda$ a dominant weight.
The {\emph sequence associated to $(G,C,\lambda)$}, denoted~$\ba_{C,\lambda}$,
is the sequence whose $n$-th term is the multiplicity
of the crystal $C_\lambda$ in the tensor power $\otimes^nC$. 
\end{defn}


\begin{prop}\label{prop:ic}
If $C$ is the crystal of the representation $V$ then $\ba_{C,\lambda}=\ba_{V,\lambda}$
for all dominant weights $\lambda$.
\end{prop}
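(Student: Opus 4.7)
The plan is to deduce the equality directly from the defining functorial properties of Kashiwara crystals, which are exactly the three properties recalled just before Definition~\ref{DEF:crystal}: (i)~the crystal attached to an irreducible representation $V_\mu$ is the connected crystal $C_\mu$, (ii)~the crystal of a direct sum is the disjoint union of the crystals, and (iii)~the crystal of a tensor product is the tensor product of the crystals. Granted these three facts, the statement should follow by applying the crystal functor to the isotypic decomposition of $\otimes^n V$.

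More precisely, first I would fix $n$ and decompose
\[
\otimes^n V \;\cong\; \bigoplus_{\mu} V_\mu^{\oplus m_\mu(n)},
\]
where the sum ranges over the dominant weights $\mu$ and $m_\mu(n)$ is, by Definition~\ref{DEF:seq}, the $n$-th term of $\ba_{V,\mu}$. Next I would apply the crystal functor to both sides. On the left, property~(iii) applied $n-1$ times gives that the crystal of $\otimes^n V$ is $\otimes^n C$. On the right, properties~(i) and~(ii) together show that the crystal of $\bigoplus_\mu V_\mu^{\oplus m_\mu(n)}$ is the disjoint union $\bigsqcup_\mu C_\mu^{\sqcup m_\mu(n)}$. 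Since the crystal of a representation is well-defined (independent of how we decompose the representation), these two crystals coincide.

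Finally, to extract the multiplicity statement, I would use the basic fact that each $C_\mu$ is connected and that two connected crystals $C_\mu$ and $C_{\mu'}$ are isomorphic if and only if $\mu = \mu'$. Consequently the number of connected components of $\otimes^n C$ isomorphic to $C_\lambda$ is exactly $m_\lambda(n)$, which is the $n$-th term of $\ba_{C,\lambda}$ by Definition~\ref{DEF:crystal}. Comparing with the definition of $\ba_{V,\lambda}$ yields $\ba_{C,\lambda}=\ba_{V,\lambda}$, as required.

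The argument is essentially a bookkeeping exercise and presents no real technical obstacle; the only point that deserves care is ensuring that the three functorial properties are invoked in a logically clean order, and being explicit that \emph{multiplicity} in the crystal side means the number of connected components isomorphic to $C_\lambda$ (so that this genuinely matches Definition~\ref{DEF:crystal}). Everything else is an immediate consequence of the standard existence and uniqueness theorem for Kashiwara crystals of integrable representations, which is the theorem I would cite from \cite[Ch.~4]{Hong2002} rather than reprove.
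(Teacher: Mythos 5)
Your argument is correct and is essentially the paper's proof in different packaging: the paper states the pairwise decompositions $V_\lambda\otimes V_\mu\cong\oplus_\nu A^\nu_{\lambda,\mu}\otimes V_\nu$ and $C_\lambda\otimes C_\mu\cong\coprod_\nu B^\nu_{\lambda,\mu}\otimes C_\nu$, cites Littelmann's decomposition rule for $\dim A^\nu_{\lambda,\mu}=|B^\nu_{\lambda,\mu}|$, and inducts on $n$, which is exactly your ``apply property~(iii) $n-1$ times and count connected components.'' The only cosmetic difference is that the paper isolates the key imported fact as Littelmann's equality of tensor-product multiplicities, whereas you fold it into the functoriality of the crystal assignment and cite the existence--uniqueness theory instead.
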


In this paper we only apply this to the sum of the two fundamental representations of $\SL(3)$
and to the seven-dimensional fundamental representation of $G_2$. For the fundamental representations
of $SL(3)$ this follows from classical results by interpreting a standard or semistandard tableau
as a sequence of partitions. For the $G_2$ case this result has been used in 
\cite{Kuperberg1997} and \cite{Westbury2005}. We give an outline of the general case which is
an application of the tensor product rule for crystals. This is \cite[Theorem~2]{Grabiner1993}.

\begin{proof} A tensor product of highest weight representations has a 
canonical vector space decomposition as
\begin{equation*}
    V_\lambda\otimes V_\mu \cong \oplus_\nu A^{\nu}_{\lambda,\mu}\otimes V_\nu.
\end{equation*}
Similarly the tensor product of crystals has a set decomposition
\begin{equation*}
    C_\lambda\otimes C_\mu \cong \coprod_\nu B^{\nu}_{\lambda,\mu}\otimes C_\nu.
\end{equation*}
The decomposition rule of \cite{Littelmann1994} asserts that $\dim A^{\nu}_{\lambda,\mu} = |B^{\nu}_{\lambda,\mu}|$.

Then the $n$-th terms of the two sequences are equal. This is proved by induction on $n$ using
the above for the inductive step.
\end{proof}

\subsection{Binomial transform}\label{sec:bt}
In this subsection we recall some basic properties of the binomial transform.
Identities associated to the binomial transform are given for instance in \cite{Boyadzhiev2018}.

The \emph{binomial transform operator} is denoted by $\bt$. This is a linear operator
acting on sequences.
Given the sequence $\ba$ with $n$-th term $a(n)$, the \Dfn{binomial transform
of $\ba$} is the sequence, denoted $\bt \ba$, whose $n$-th term is
\[
\sum_{i=0}^n \binom{n}{i}a(i).
\]

Binomial transforms arise naturally for sequences associated to the representations of
reductive complex algebraic groups.
\begin{lemma} \label{LEM:binomialrep} 
Assume that $\ba_{V,\lambda}$ is the sequence associated to $(G, V,\lambda)$ as specified in
Definition~\ref{DEF:seq}. Then $\ba_{V\oplus \bC,\lambda} = \bt \ba_{V,\lambda}$. 
\end{lemma}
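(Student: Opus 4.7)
The plan is to unpack both sides of the claimed equality and exhibit them as equal by a direct decomposition of $(V \oplus \bC)^{\otimes n}$.

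First I would expand the $n$-th tensor power of a direct sum using distributivity of $\otimes$ over $\oplus$. For each subset $S \subseteq \{1,\dots,n\}$, the corresponding summand of $(V\oplus \bC)^{\otimes n}$ has $V$ in the tensor slots indexed by $S$ and a copy of the trivial representation $\bC$ in the other slots. Since $\bC$ is the monoidal unit (i.e.\ $W \otimes \bC \cong W$ as $G$-representations), each such summand is canonically isomorphic to $V^{\otimes |S|}$. Collecting subsets of the same size gives the $G$-equivariant decomposition
\[
(V \oplus \bC)^{\otimes n} \;\cong\; \bigoplus_{i=0}^{n} \binom{n}{i}\, V^{\otimes i},
\]
where the binomial coefficient records the number of subsets of size $i$.

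Next I would apply the functor $\Hom_G(V_\lambda,\,\cdot\,)$, or equivalently read off the multiplicity of $V_\lambda$ on both sides. Multiplicity is additive over direct sums, so
\[
\ba_{V\oplus \bC,\lambda}(n) \;=\; \sum_{i=0}^{n} \binom{n}{i}\, \ba_{V,\lambda}(i),
\]
which is by definition the $n$-th term of $\bt\, \ba_{V,\lambda}$. This completes the identification.

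There is no real obstacle here: the only subtlety is ensuring that the isomorphism $(V\oplus \bC)^{\otimes n} \cong \bigoplus_i \binom{n}{i} V^{\otimes i}$ is written carefully as a $G$-equivariant isomorphism, rather than a vector-space isomorphism, so that taking multiplicities of $V_\lambda$ is legitimate. Once that is observed, the lemma reduces to the combinatorial identity already encoded in the definition of $\bt$. If one prefers, the same argument can be run at the level of characters: $\chi(V\oplus\bC)^n = (\chi(V)+1)^n = \sum_i \binom{n}{i}\chi(V)^i$, and extracting the coefficient of $\chi(V_\lambda)$ (with respect to the basis of irreducible characters, using orthogonality) gives the same binomial identity.
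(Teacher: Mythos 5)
Your proposal is correct and follows essentially the same route as the paper: decompose $(V\oplus\bC)^{\otimes n}\cong\bigoplus_{i=0}^n\binom{n}{i}V^{\otimes i}$ as $G$-representations (the paper establishes this by induction as in the binomial theorem, you by indexing summands with subsets, which is the same identity) and then compare multiplicities of $V_\lambda$, which are additive over direct sums. Your explicit remark that the isomorphism must be $G$-equivariant is a welcome precision that the paper leaves implicit.
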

\begin{proof}
Expanding the tensor product gives
\begin{equation*}
    \otimes^n(V\oplus\bC) \cong \oplus_{i=0}^n \bC^{\binom ni}\otimes (\otimes^iV) .
\end{equation*}
This can be proved by induction on $n$ as in the binomial theorem.

Now comparing invariant tensors on both sides gives the result.
\end{proof}

The binomial transform also arises naturally for lattice walks restricted to a domain.
\begin{lemma} \label{LEM:binomialwalks}
Assume that a sequence $\ba$ enumerates walks in a lattice, confined to a
domain $D$, using a set of steps~$S$. Then $\bt \ba$ is given by adding a new
step corresponding to the zero element of the lattice; that is, $S$ is
replaced by the disjoint union $S\coprod \{0\}$ without changing the domain.
\end{lemma}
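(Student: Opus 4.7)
The plan is to give a direct bijective proof by decomposing a walk according to the positions of its zero steps. Let $W_n$ denote the set of walks of length $n$ in the domain $D$ using steps from $S$, so $|W_n|=a(n)$, and let $W'_n$ denote the set of walks of length $n$ in $D$ using steps from $S \coprod \{0\}$. The goal is to prove that $|W'_n| = \sum_{i=0}^n \binom{n}{i} a(i)$.

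First I would observe that since a zero step does not change the walker's position, the sequence of positions visited by a walk $w \in W'_n$ is entirely determined by the positions that are visited by the subsequence of its non-zero steps. In particular, the containment condition ``all intermediate positions lie in $D$'' for a walk using $S \coprod \{0\}$ is equivalent to the same condition for the walk obtained by deleting all zero steps. This observation is the only content-bearing step in the argument.

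Next I would set up the bijection explicitly. Every walk $w \in W'_n$ is uniquely specified by two pieces of data: (i) the subset $I \subseteq \{1,\dotsc,n\}$ of time indices at which $w$ takes a non-zero step, and (ii) the walk $w' \in W_{|I|}$ obtained by recording only the non-zero steps in order. Conversely, given any subset $I$ of size $i$ and any $w' \in W_i$, we recover a unique $w \in W'_n$ by inserting zero steps at the indices outside $I$; the previous paragraph guarantees that the resulting walk stays in $D$. Stratifying by $i = |I|$ and using that there are $\binom{n}{i}$ subsets of size $i$ yields
\[
|W'_n| \;=\; \sum_{i=0}^n \binom{n}{i}\, a(i) \;=\; (\bt \ba)(n),
\]
which is the desired identity.

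There is no real obstacle here; the only thing to be careful about is the equivalence noted in the first step, namely that inserting or removing zero steps does not change the set of lattice points visited and hence does not affect confinement to $D$. Once that is stated, the binomial count is immediate from the choose-the-zero-positions decomposition.
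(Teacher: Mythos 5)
Your proof is correct and follows essentially the same route as the paper: delete the zero steps to obtain a walk in $S$ of some length $i$, record the set of time indices of the deleted steps, and count the $\binom{n}{i}$ ways of choosing those positions. The only difference is that you make explicit the (easy but content-bearing) observation that inserting or deleting zero steps does not affect confinement to $D$, which the paper leaves implicit.
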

\begin{proof}
A path of length $n$ with steps $S\coprod \{0\}$ has $i$ steps $S$ and $j$ steps~0 with
$i+j=n$. Taking the steps in $S$ and ignoring the 0 steps gives a path of length~$i$ in~$S$.
Each path of length~$i$ in~$S$ appears $\binom nj$ times corresponding to the partitions of
a set of size~$n$ into two disjoint subsets of size~$i$ and~$j$.
\end{proof}

The following is a corollary to both Lemma~\ref{LEM:binomialrep} and Lemma~\ref{LEM:binomialwalks}.
\begin{cor} \label{COR:binomialcrystal} 
Assume that $\ba_{C,\lambda}$ is the sequence associated to $(G, C,\lambda)$ as specified in
Definition~\ref{DEF:crystal}.
Let $\ast$ be the trivial crystal. Then 
$$\ba_{C\coprod \ast} = \bt \ba_{C,\lambda}.$$
\end{cor}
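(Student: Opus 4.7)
The plan is to observe that Corollary~\ref{COR:binomialcrystal} is the crystal-theoretic mirror of Lemma~\ref{LEM:binomialrep}, with the representation operations $(V\oplus\bC,\otimes)$ replaced by the crystal operations $(C\coprod\ast,\otimes)$. Two routes present themselves, corresponding to the two lemmas cited.

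The most economical route reduces to Lemma~\ref{LEM:binomialrep} through Proposition~\ref{prop:ic}. Take $V$ to be a representation of $G$ whose crystal is $C$. Since the trivial representation $\bC$ has crystal $\ast$, and since the crystal of a direct sum of representations is the disjoint union of their crystals, the crystal associated to $V\oplus\bC$ is exactly $C\coprod\ast$. Chaining equalities, I then write
$$\ba_{C\coprod\ast,\lambda} \;=\; \ba_{V\oplus\bC,\lambda} \;=\; \bt\,\ba_{V,\lambda} \;=\; \bt\,\ba_{C,\lambda},$$
where the two outer equalities are applications of Proposition~\ref{prop:ic} and the middle one is Lemma~\ref{LEM:binomialrep}.

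A parallel proof entirely within the crystal category mimics the proof of Lemma~\ref{LEM:binomialrep}. Using that $\ast$ is a two-sided identity for $\otimes$ and that tensor product distributes over disjoint union, an induction on $n$ yields the crystal isomorphism
$$\otimes^n(C\coprod\ast) \;\cong\; \coprod_{i=0}^{n} \binom{n}{i}\cdot \otimes^i C,$$
where $\binom{n}{i}\cdot X$ denotes $\binom{n}{i}$ disjoint copies of $X$. Counting occurrences of the connected crystal $C_\lambda$ on both sides produces the binomial transform identity immediately. Finally, Lemma~\ref{LEM:binomialwalks} gives a third route via the standard dictionary that identifies invariant words in $\otimes^n C$ with certain closed lattice walks; under that dictionary, adjoining $\ast$ to $C$ is precisely the insertion of a zero step.

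There is no real obstacle: the content of the statement is that the binomial transform detects only the formal bilinear structure of tensor powers, and this structure is preserved under the passage representations $\leadsto$ crystals $\leadsto$ walks. The only technicality worth flagging is that the first route requires $C$ to come from a representation, so that Proposition~\ref{prop:ic} applies; this hypothesis is implicit throughout the paper and holds in every instance considered later.
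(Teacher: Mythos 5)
Your proposal is correct and matches the paper's (unwritten) argument: the paper states the result as an immediate corollary of both Lemma~\ref{LEM:binomialrep} (via Proposition~\ref{prop:ic}) and Lemma~\ref{LEM:binomialwalks}, which are exactly your first and third routes, and your direct crystal-theoretic induction is just the proof of Lemma~\ref{LEM:binomialrep} transported to crystals. Your remark that the reduction through Proposition~\ref{prop:ic} needs $C$ to be the crystal of a representation is a fair point the paper leaves implicit; the purely crystal-theoretic route sidesteps it.
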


We can also consider iterations of the binomial transform.
\begin{lemma}\label{lem:bts} 
Given the sequence $\ba$ with $n$-th term $a(n)$. 
For $k\in\bZ$, the $k$-th binomial transform of $\ba$ is given by
\[
(\bt^k \ba)(n) = \sum_{i=0}^n k^{n-i}\binom{n}{i}a(i) \ \text{ for each } \ n \in \bN.
\]
\end{lemma}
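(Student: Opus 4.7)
The plan is to define, for each integer $k$, a linear operator $S_k$ on sequences by
\[
(S_k \ba)(n) = \sum_{i=0}^n k^{n-i}\binom{n}{i}a(i),
\]
with the convention $0^0 = 1$, and to show that the family $\{S_k\}_{k\in\bZ}$ is a one-parameter group extending the binomial transform. First I would check that $S_0$ is the identity (only the $i=n$ summand survives) and that $S_1$ coincides with $\bt$ by the definition recalled in \S\ref{sec:bt}. The goal is then reduced to establishing the semigroup law $S_k \circ S_\ell = S_{k+\ell}$ for all $k,\ell\in\bZ$, since iterating it $|k|$ times starting from $S_1 = \bt$ and $S_{-1} = S_1^{-1}$ delivers $S_k = \bt^k$ for every integer $k$.

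The semigroup law is the technical heart of the lemma. After unfolding the two sums defining $(S_k(S_\ell \ba))(n)$ and exchanging the order of summation, the inner sum takes the form $\sum_{j=i}^n \binom{n}{j}\binom{j}{i} k^{n-j}\ell^{j-i}$. Applying the standard subset-of-a-subset identity $\binom{n}{j}\binom{j}{i} = \binom{n}{i}\binom{n-i}{j-i}$ and reindexing by $m = j-i$ transforms this into $\binom{n}{i}\sum_{m=0}^{n-i}\binom{n-i}{m} k^{(n-i)-m}\ell^m$, which by the binomial theorem equals $\binom{n}{i}(k+\ell)^{n-i}$. Reassembling the outer sum then gives exactly $(S_{k+\ell}\ba)(n)$, as desired.

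Once the semigroup law is available, the conclusion follows formally: from $S_0 = \mathrm{id}$ and $S_1 = \bt$ we obtain by induction $S_k = \bt^k$ for every $k \geq 0$, while the relation $S_k \circ S_{-k} = S_0 = \mathrm{id}$ shows that $\bt$ is invertible on the space of sequences with $\bt^{-1} = S_{-1}$, extending the identification $S_k = \bt^k$ to all of $\bZ$. The one step that deserves real care is this last extension to negative $k$: a naive forward induction on $k$ starting from the defining formula of $\bt$ only covers $k \geq 0$, and it is precisely the semigroup structure that provides, without computation, the invertibility of $\bt$ required to handle $k < 0$ uniformly. An equivalent and equally short route, worth mentioning as a cross-check, is to pass to exponential generating functions: the operator $\bt$ corresponds to multiplication of the EGF by $e^x$, so $\bt^k$ corresponds to multiplication by $e^{kx}$ for every $k\in\bZ$, and extracting the coefficient of $x^n/n!$ in $e^{kx}A(x)$ reproduces the stated formula immediately.
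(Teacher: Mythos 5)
Your proof is correct and complete. The paper itself states Lemma~\ref{lem:bts} without proof (the section merely points to Boyadzhiev's book for binomial-transform identities), so there is no argument of the authors' to compare against; your write-up supplies exactly what is missing. The semigroup computation $S_k\circ S_\ell=S_{k+\ell}$ via $\binom{n}{j}\binom{j}{i}=\binom{n}{i}\binom{n-i}{j-i}$ and the binomial theorem is the standard and right way to do this, and you correctly flag the two points that actually need care: the convention $0^0=1$ (so that $S_0=\mathrm{id}$ and the case $k+\ell=0$ of the semigroup law comes out right) and the fact that the extension to negative $k$ is not a naive induction but rests on the invertibility of $\bt$, which the semigroup law delivers for free via $S_k\circ S_{-k}=S_0$. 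The EGF cross-check (multiplication by $e^{kx}$) is also valid and is arguably the one-line proof; either route would serve the paper well, particularly since the inverse binomial transform ($k=-1$) is used later in the proof of Theorem~\ref{THM:quadrantrec}.
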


The binomial transform can also be regarded as an operator on the generating
function of a sequence. Let $G(t) = \sum_{n = 0}^\infty a(n) t^n$ be the
generating function of the sequence $\ba$. We denote the generating function
of $\bt^k \ba$ by $\bt^kG$. Then we have the following immediate lemma.
\begin{lemma}\label{lem:btg}
For $k\in\bZ$, the $k$-th binomial transform of $G(t)$ is
\[ (\bt^kG)(t) = \frac 1{1-k\;t} G\left(\frac {t}{1-k\;t}\right). \]
\end{lemma}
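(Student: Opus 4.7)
The plan is to substitute the explicit term-by-term formula from Lemma~\ref{lem:bts} into the generating function of $\bt^k\ba$ and rearrange the resulting double sum into the claimed closed form. Concretely, I would start from the definition
\[ (\bt^k G)(t) \;=\; \sum_{n\ge 0}\Bigl(\sum_{i=0}^n k^{n-i}\binom{n}{i}a(i)\Bigr) t^n, \]
and exchange the order of summation, writing the right-hand side as
\[ \sum_{i\ge 0} a(i)\, t^i \sum_{m\ge 0}\binom{m+i}{i}(kt)^m, \]
where the substitution $m=n-i$ has been made in the inner sum. This interchange is legitimate at the level of formal power series because only finitely many pairs $(i,m)$ contribute to any fixed power of $t$.

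Next I would apply the classical negative binomial identity
\[ \sum_{m\ge 0}\binom{m+i}{i}x^m \;=\; \frac{1}{(1-x)^{i+1}}, \]
with $x=kt$, to collapse the inner sum to $(1-kt)^{-(i+1)}$. Factoring out one copy of $(1-kt)^{-1}$ from the outer sum then leaves
\[ \frac{1}{1-kt}\sum_{i\ge 0}a(i)\Bigl(\frac{t}{1-kt}\Bigr)^{\!i}, \]
and the remaining sum is precisely $G\!\bigl(t/(1-kt)\bigr)$ by definition of $G$, which yields the identity.

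There is no real obstacle beyond bookkeeping; the only small things to verify are the swap of summations (finiteness of the coefficient of each $t^n$) and that substitution of $t/(1-kt)$ into $G$ makes sense as a formal power series, which holds because $t/(1-kt) = t+O(t^2)$ has zero constant term. An alternative, essentially equivalent, route would be to prove the case $k=1$ first (from $(\bt\ba)(n)=\sum_i \binom{n}{i}a(i)$ via the same exchange of sums) and then deduce the general $k\in\bZ$ case by iterating or by the rescaling $a(i)\mapsto k^{-i}a(i)$; I would probably present the direct computation above since it is shorter and covers negative $k$ uniformly as a formal identity.
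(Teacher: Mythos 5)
Your computation is correct and complete: the interchange of summation, the substitution $m=n-i$, and the negative binomial identity $\sum_{m\ge 0}\binom{m+i}{i}x^m=(1-x)^{-(i+1)}$ give exactly the claimed closed form, and you rightly note that everything is legitimate at the level of formal power series since $t/(1-kt)$ has zero constant term. The paper simply declares this lemma ``immediate'' from Lemma~\ref{lem:bts} and offers no proof, so your argument is precisely the standard verification the authors had in mind.
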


\section{Octant sequences}\label{sec:octant}
In this section we consider a sequence associated to the representation theory of the exceptional
simple algebraic group $G_2$ by the construction in Definition~\ref{DEF:seq}.
Then we relate this sequence and its binomial transforms to known sequences of hesitating
tableaux of height 2 and vacillating tableaux of height 2. These were introduced in \cite{Chen2007}.

The algebraic group $G_2$ has dimension 14 and rank 2. It can be constructed as the automorphism group
of the (complex) octonions. By construction, the octonions are then an eight-dimensional representation of $G_2$.
The unit spans a one-dimensional invariant subspace and the imaginary octonions are an invariant complementary
subspace. This seven-dimensional representation of $G_2$ is a fundamental representation.

\subsection{Root systems}
The root system of $G_2$ 
(see \cite[Lecture~22]{FuHa91})
is shown in Figure~\ref{fig:G2_roots} with the fundamental chamber shaded. The two simple roots are $\alpha$ and $\beta$ with $\alpha$ short and $\beta$ long. The two fundamental weights are $\lambda$ and $\theta$. The highest weight representation with highest weight $\lambda$ is the seven-dimensional representation we are interested in. The highest weight representation with highest weight $\theta$ is the adjoint representation, so $\theta$ is the highest root. 

These are related by
\[ \lambda = 2\,\alpha+\beta, \qquad \theta = 3\,\alpha+2\,\beta.
\]

The element $\rho$ is defined in terms of the roots as half the sum of the positive roots. The six positive roots are
\[ \alpha, \beta, \alpha+\beta, 2\,\alpha+\beta, 3\,\alpha+\beta, 3\,\alpha+2\,\beta. \]
This gives $\rho = 5\,\alpha+3\,\beta$.

The element $\rho$ is also defined in terms of the weights as the sum of the fundamental weights. This gives $\rho=\lambda+\theta$.
\begin{figure}
    \centering
\begin{tikzpicture}
    \foreach\ang in {30,60,...,360}{
\draw[-,green!80!black,thin] (0,0) -- (\ang:3.5cm);
    }
    \draw[->,red,thick] (0,0) -- (0,3);
    \node[anchor=south west] at (0,3) {$\beta$};
    \draw[->,red,thick] (0,0) -- (300:2cm);
    \node[anchor=south west] at (300:2cm) {$\alpha$};
    \fill[blue!20] (0,0) -- (3.5,0) -- (30:3.5cm) -- cycle;
    \draw[->,black,thick] (0,0) -- (2,0);
    \node[anchor=north west] at (2,0) {$\lambda$};
    \draw[->,black,thick] (0,0) -- (30:3cm);
    \node[anchor=south west] at (30:3cm) {$\theta$};
\end{tikzpicture}
    \caption{$G_2$ root system}
    \label{fig:G2_roots}
\end{figure}

\subsection{Lattice walks}\label{SUBSEC:latticewalks}
In this subsection, we give a combinatorial proof of Theorem~\ref{thm:bt}.
Let $G$ be $G_2$, let $V$ be the seven-dimensional fundamental representation, and $C$ be
the associated crystal. We now apply the general theory of \S\ref{SEC:inv}
to this case.

\begin{figure}
    \centering
\begin{tikzpicture}[scale=2]
    \fill[blue!20] (0,0) -- (2,0) arc(0:30:2) -- cycle;
    \draw[green] (1,0) -- (60:1) -- (120:1) -- (-1,0) -- (240:1) -- (300:1) -- cycle;
    \draw[->,blue] (0,0) -- (1,0) node {(1,0)};
    \draw[->,blue] (0,0) -- (60:1) node {(-1,1)};
    \draw[->,blue] (0,0) -- (120:1) node {(-2,1)};
    \draw[->,blue] (0,0) -- (-1,0) node {(-1,0)};
    \draw[->,blue] (0,0) -- (240:1) node {(1,-1)};
    \draw[->,blue] (0,0) -- (300:1) node {(2,-1)};
    \draw[blue] (0,0) node {(0,0)};
\end{tikzpicture}
    \caption{Steps in weight lattice}
    \label{fig:stepswt}
\end{figure}

\begin{figure}
    \centering
\begin{tikzpicture}[scale=1.7]
    \fill[blue!20] (0,0) -- (2,0) arc(0:45:2) -- cycle;
    \draw[green] (1,1) -- (1,-1) -- (-1,-1) -- (-1,1) -- cycle;
    \draw[->,blue] (0,0) -- (1,0) node {(1,0)};
    \draw[->,blue] (0,0) -- (0,1) node {(0,1)};
    \draw[->,blue] (0,0) -- (-1,1) node {(-1,1)};
    \draw[->,blue] (0,0) -- (-1,0) node {(-1,0)};
    \draw[->,blue] (0,0) -- (0,-1) node {(0,-1)};
    \draw[->,blue] (0,0) -- (1,-1) node {(1,-1)};
    \draw[blue] (0,0) node {(0,0)};
\end{tikzpicture}
    \caption{Steps in octant}\label{fig:stepswk}
\end{figure}

The highest weight words in tensor powers of $C$ correspond to lattice walks 
in the weight lattice of $G_2$ restricted to the dominant chamber.
These lattice walks are studied by Kuperberg~\cite{Kuperberg1997} and by Westbury~\cite{Westbury2005}.
Since $V$ has dimension~7, $C$ has cardinality~7, and the seven steps are shown in
Figure~\ref{fig:stepswt}, where the coordinates are with respect to the basis of fundamental weights. 
One can make it more explicit by taking the intersection of the Euclidean lattice in three-dimensional space with the plane $x+y+z=0$. 
Then, we can take the short simple root $\alpha = (0,1,-1)$, the long simple root $\beta = (1,-2,1)$,
 the fundamental weight $\lambda = (1,0,-1)$,
and the fundamental weight $\theta = (2,-1,-1)$.

The highest weight words correspond with these steps, restricted to the dominant
chamber and with the extra condition that the step $(0,0)$ is not permitted at
a weight $(0,k)$ for $k\geqslant 0$.

A \Dfn{hesitating tableau} of semilength $n$ is a walk in the Young lattice with $n$ steps.
Each step is one of the following pairs of moves on the Young lattice:
\begin{itemize}
    \item do nothing, add a cell
    \item remove a cell, do nothing
    \item add a cell, remove a cell
\end{itemize}
The \Dfn{shape} is the final partition.

A hesitating tableau of \Dfn{height} $h$
is a hesitating tableau such that every partition in the sequence has height at most $h$.

There is a lattice walk interpretation of hesitating tableaux of height~2 given by 
Bousquet-M\'elou and Xin in~\cite{Bousquet-Melou2005}.

A hesitating tableau of height 2 can be interpreted as a walk in $\bZ^2$ by identifying
partitions with at most two nonzero rows with the set
\[ \{(x,y)\in\bZ^2 | x\geqslant y\geqslant 0 \} . \]
There are 8 steps which are shown in Figure~\ref{fig:stepswk}.
There are eight steps since there are:
\begin{itemize}
    \item two ways to do nothing then add a cell,
    \item two ways to remove a cell then do nothing
    \item four ways to remove a cell then add a cell
\end{itemize}
Two of the four ways to remove a cell then add a cell give
the step~$(0,0)$, namely add a cell on the first row then remove this cell and add a cell on the second row then remove this cell. It is always allowed to add and then remove a cell on the first row. The step
which adds and removes a cell on the second row is not permitted
on the line $x=y$.

\begin{proof}[Proof of Theorem~\ref{thm:bt}]

We compare the two descriptions of the walks. The steps of the walks for the
sequence $T_3$ are shown in Figure~\ref{fig:stepswt} and the steps of the
walks for the sequence $E_3$ are shown in Figure~\ref{fig:stepswk}.

In order to compare these, we first make the change of coordinates $(x,y) \to
(x + y, x)$, which maps the steps from  Figure~\ref{fig:stepswt} to those of Figure~\ref{fig:stepswk}. This identifies the six non-zero steps, as well as the two
domains.

By Lemma~\ref{LEM:binomialwalks}, it remains to compare the zero steps. There
is one zero step in the~$T_3$ case and two in the $E_3$ case. In the $E_3$
case we have the zero step which adds and then removes a cell on the second
row. The boundary condition is that this step is not allowed on the line
$x=y$. After the change of coordinates, this is the same boundary condition as
the zero step in the $T_3$ case. The second zero step in the $E_3$ case adds
and then removes a cell in the first row. This is always allowed. 

\end{proof} 

\begin{thm}\label{THM:hes} For each $n\geqslant 0$ and $r,s\geqslant 0$ there is a correspondence between highest weight words
of weight $(r,s)$ and length $n$ in the crystal $C\coprod\ast$ and hesitating tableaux of height 2,
semilength $n$ and shape $(r+s,s)$.
\end{thm}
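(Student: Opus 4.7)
The plan is to refine the proof of Theorem~\ref{thm:bt} by keeping track of where each walk ends. Both sides of the claimed correspondence can be recast as counts of lattice walks with a prescribed endpoint; once this is done, the same linear change of coordinates already used for Theorem~\ref{thm:bt} matches them.

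First I would translate the left-hand count into a walk-counting problem. By Proposition~\ref{prop:ic} and the discussion in \S\ref{SUBSEC:latticewalks}, highest weight words of weight $(r,s)$ and length~$n$ in the crystal~$C$ correspond to length-$n$ walks in the weight lattice of~$G_2$ that start at the origin, end at $(r,s)$, stay in the dominant chamber, use the seven steps of Figure~\ref{fig:stepswt}, and obey the boundary condition that the zero step is forbidden at each weight of the form $(0,k)$. Corollary~\ref{COR:binomialcrystal} then shows that passing from $C$ to $C\coprod\ast$ amounts to adjoining an additional zero step that is always allowed, so the left-hand side enumerates such walks with eight steps in all. Next, I would rewrite the right-hand count using the Bousquet-M\'elou--Xin encoding recalled in the excerpt: hesitating tableaux of height~$2$, semilength~$n$, and shape $(r+s,s)$ are exactly length-$n$ walks in the octant $\{x\geqslant y\geqslant 0\}$, starting at $(0,0)$, ending at $(r+s,s)$, using the six non-zero steps of Figure~\ref{fig:stepswk} together with two zero steps, one always allowed (add and then remove a cell on the first row) and one forbidden on the diagonal $x=y$ (add and then remove a cell on the second row).

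The identification of the two walk models is effected by the linear change of coordinates used in the proof of Theorem~\ref{thm:bt}. A direct check shows that it sends the dominant chamber of the $G_2$ weight lattice bijectively onto the region $\{x\geqslant y\geqslant 0\}$, matches the six non-zero steps of Figure~\ref{fig:stepswt} with those of Figure~\ref{fig:stepswk}, carries the wall $\{(0,k):k\geqslant 0\}$ to the diagonal $\{x=y\}$, and sends the weight $(r,s)$ to the point $(r+s,s)$. Under this identification, the two zero steps on each side are paired so that the constrained one (on the wall, respectively on the diagonal) corresponds to the constrained one, while the free zero step coming from $\ast$ corresponds to the free one coming from row~$1$. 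The resulting bijection between walks preserves length and sends ending weight $(r,s)$ to ending point $(r+s,s)$, which is the theorem. Note that Theorem~\ref{thm:bt}, summed over~$n$, is the special case $r=s=0$.

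The only delicate point, and the main place to be careful, is the bookkeeping of the two indistinguishable zero vectors on each side: one must verify that the constrained zero step on the $G_2$ side is precisely the one inherited from the crystal~$C$ (and not the one adjoined by~$\ast$), and likewise that it is matched with the row-$2$ zero step rather than the row-$1$ zero step in the tableau model. Once this matching is justified, the rest of the argument is a direct comparison of the two step sets and their endpoints.
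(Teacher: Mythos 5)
Your proposal is correct and follows essentially the same route as the paper: the paper's proof is exactly the observation that under the linear change of coordinates from the proof of Theorem~\ref{thm:bt} the eight steps, the two domains, the constraint on the zero step, and the endpoint $(r,s)\mapsto(r+s,s)$ all match. You simply spell out the details (including the careful pairing of the constrained versus free zero steps) that the paper leaves as ``straightforward to check.''
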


\begin{proof} The proof is the observation that the two descriptions of the lattice walks agree under a simple
change of coordinates. The point $(r,s)$ in the weight lattice is dominant if $r,s\geqslant 0$. This point
corresponds to the two part partition $(r+s,s)$.

It is straightforward to check that under this change of coordinates, the eight steps correspond, the two domains
correspond, and the extra condition on a step of weight $(0,0)$ also corresponds.
\end{proof}

There is a variation of Theorem~\ref{THM:hes} for vacillating tableaux.
A \Dfn{vacillating tableau} of semilength $n$ is an excursion in the Young lattice with~$n$ steps.
Each step is one of the following pairs of moves on the Young lattice:
\begin{itemize}
    \item do nothing twice
    \item do nothing, add a cell
    \item remove a cell, do nothing
    \item remove a cell, add a cell
\end{itemize}
A vacillating tableau of \Dfn{height} $h$
is a vacillating tableau such that every partition in the sequence has height at most $h$.

There is a lattice walk interpretation of vacillating tableaux of height~2 also given in \cite{Bousquet-Melou2005}.
There are nine steps since there are six steps which change the shape and three steps which leave the shape unaltered.

Vacillating tableaux were introduced in \cite{Chen2007} in the context of set partitions.
A \Dfn{set partition} will mean a partition of the ordered set
$$[n]=\{1,2,\dotsc,n\}$$ 
 into non-empty pairwise disjoint subsets.
The \Dfn{standard representation} of a set partition is a graph
with vertex set $[n]$. The edges are arcs connecting pairs of
elements in a block which are adjacent in the numerical order
on the block.

A \Dfn{singleton} in a set partition is a block with one element.

A \Dfn{$k$-crossing} in a set partition is a $k$-subset of arcs,
$(i_1,j_1),\dotsc,(i_k,j_k)$ such that
\[ i_1<i_2\dotsb <i_k<j_1<j_2\dotsb <j_k
\]

An \Dfn{enhanced $k$-crossing} in a set partition is a $k$-subset of arcs,
$(i_1,j_1),\dotsc,(i_k,j_k)$ such that
\[ i_1<i_2\dotsb <i_k\leqslant j_1<j_2\dotsb <j_k
\]

\begin{figure}
    \centering
\begin{tikzpicture}
\draw (3,0) node[below] {$j_1$} arc(0:180:1.5) node[below] {$i_1$};
\draw (4,0) node[below] {$j_2$} arc(0:180:1.5) node[below] {$i_2$};
\draw (5,0) node[below] {$j_3$} arc(0:180:1.5) node[below] {$i_3$};
\end{tikzpicture}\qquad
\begin{tikzpicture}
\draw (2,0) node[below] {$i_3=j_1$} arc(0:180:1) node[below] {$i_1$};
\draw (3,0) node[below] {$j_2$} arc(0:180:1) node[below] {$i_2$};
\draw (4,0) node[below] {$j_3$} arc(0:180:1);
\end{tikzpicture}
    \caption{A 3-crossing and an enhanced 3-crossing}
    \label{fig:3cr}
\end{figure}

\begin{thm}\label{THM:vac} For each $n\geqslant 0$ and $r,s\geqslant 0$ there is a correspondence between highest weight words
of weight $(r,s)$ and length $n$ in the crystal $C\coprod\ast\coprod\ast$ and vacillating tableaux of height 2,
semilength $n$ and shape $(r+s,s)$.
\end{thm}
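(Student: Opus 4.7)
The plan is to carry out the same pattern of argument as in the proof of Theorem~\ref{THM:hes}, but with a second application of the binomial transform. First, I would expand the lattice walk description of highest weight words in $C\coprod\ast\coprod\ast$: by Corollary~\ref{COR:binomialcrystal}, or equivalently by iterating Lemma~\ref{LEM:binomialwalks}, these are walks in the dominant chamber of $G_2$ whose nine step types are the six nonzero $G_2$-steps, the original $C$-zero step (forbidden at weights $(0,k)$ for $k\geq 0$), and two additional unconditional zero steps, one from each copy of $\ast$. A highest weight word of weight $(r,s)$ and length $n$ corresponds to such a walk starting at the origin and ending at $(r,s)$.

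Next, I would re-use the change of coordinates $(x,y)\mapsto(x+y,y)$ from the proof of Theorem~\ref{THM:hes}. This sends the dominant chamber onto the octant $\{x\geq y\geq 0\}$, sends the dominant weight $(r,s)$ to the partition $(r+s,s)$, and places the six nonzero weight-lattice steps in bijection with the six nonzero octant steps used for vacillating tableaux (Figure~\ref{fig:stepswk}). Under this map, the forbidden locus $\{x=0\}$ for the $C$-zero step goes to the diagonal $\{x=y\}$ of the octant.

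It then remains to match the three zero steps and their boundary conditions. On the vacillating side, two of the zero steps---``do nothing twice'' together with one of the ``remove a cell, add a cell'' steps---can be taken to be unconditionally available in the octant, while the remaining zero step is forbidden exactly on the diagonal $\{x=y\}$. Pairing the two unconditional zero steps with those contributed by the two trivial crystals $\ast$, and the constrained one with the $C$-zero step, completes the bijection and establishes the desired correspondence.

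The main (and essentially only) technical obstacle will be this last step: the careful case analysis verifying which two of the vacillating zero steps are unconditional in the octant and that the third is forbidden precisely on $\{x=y\}$. This is the direct analogue of the zero-step verification carried out at the end of the proof of Theorem~\ref{THM:hes}, only now with three zero steps to track instead of two; apart from this bookkeeping, the argument parallels that theorem step by step.
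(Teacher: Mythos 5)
Your overall strategy is the one the paper intends---its proof of this theorem simply defers to the proof of Theorem~\ref{THM:hes}---and the first two stages of your plan are fine: the description of highest weight words in $C\coprod\ast\coprod\ast$ as octant walks with nine steps, and the change of coordinates sending the dominant chamber to the octant and the weight $(r,s)$ to the shape $(r+s,s)$. The gap is exactly at the step you yourself single out as the only technical obstacle, and your guess about how it resolves is wrong. In a vacillating tableau the cell is removed \emph{first} and added second, so the intermediate shape of ``remove then add on row~1'' is $(\lambda_1-1,\lambda_2)$, which is a partition only when $\lambda_1>\lambda_2$ (the step is forbidden on the diagonal), while the intermediate shape of ``remove then add on row~2'' is $(\lambda_1,\lambda_2-1)$, which is a partition only when $\lambda_2\geqslant 1$ (the step is forbidden on the axis $\lambda_2=0$). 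Only ``do nothing twice'' is unconditional. So the tableau side has one unconditional zero step, one forbidden on the diagonal, and one forbidden on the axis, whereas the crystal side has two unconditional zero steps and one forbidden on the diagonal. No relabelling of the three zero steps can match these: they disagree at every point of the positive $x$-axis. (Contrast with the hesitating case, where the cell is added first, so ``add then remove on row~1'' genuinely is unconditional; this asymmetry is why the argument of Theorem~\ref{THM:hes} closes and this one does not.)

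The mismatch is not repairable bookkeeping; the two counts actually differ. For $n=1$ and $(r,s)=(0,0)$ there are $2$ highest weight words of weight $0$ in $C\coprod\ast\coprod\ast$ (the two letters coming from the two trivial summands), but only $1$ vacillating tableau of semilength $1$, height $2$ and empty shape, namely ``do nothing twice''; for $n=2$ the counts are $5$ versus $2$. The true relationship carries an index shift---length-$n$ highest weight words correspond to vacillating tableaux of semilength $n+1$---which is precisely the shift in the known identity relating \oeis{A108307} to \oeis{A108304}, and which is visible in Figure~\ref{fig:seq}, where \oeis{A108304} is listed as $1,2,5,15,\dots$ rather than with the OEIS offset $1,1,2,5,\dots$. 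In fairness, the paper's own one-line proof glosses over exactly the same point, so you have faithfully reproduced the intended argument; but the verification you defer to the end fails if carried out, and the statement needs the shift built in before any version of this proof can succeed.
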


\begin{proof}
The proof is essentially the same as the proof of Theorem~\ref{THM:hes}.
\end{proof}

The main innovation in \cite{Chen2007} is the construction of
a bijection between set partitions of $[n]$ and vacillating
tableaux of semilength $n$ and empty shape. The set partition has a $k$-crossing
if and only if some partition in the vacillating tableau has length
at least $k$. In particular, for $k=3$, this is a bijection between
3-noncrossing set partitions and vacillating tableaux of height 2
and empty shape.

There is a correspondence between hesitating tableaux of semilength $n$,
empty shape and height $h$ and set partitions of $n$ with no enhanced $h+1$-crossing.

This is proved by Lin in~\cite{MR3779614} and by Marberg in~\cite[Proposition~5.8]{Marberg2012}.
In \cite{gil19} the sequence \oeis{A108307} is called $NC_{0,3}$, the sequence \oeis{A108304} is called $NC_{1,3}$;
the binomial transform relation between them is given in~\cite{MR3779614, gil19}. 

A corollary is that there is a correspondence between invariant words of length $n$ in $C$
and set partitions of $[n]$ with no singleton and no enhanced 3-crossing.

There is another combinatorial interpretation based on \cite{MR3168516}
and \cite{MR3779614}.
\begin{defn}
An \Dfn{inversion sequence} of length $n$ is a sequence \\
$(x_1,x_2,\dotsc ,x_n)$ such that $1\leqslant x_i\leqslant i$.
\end{defn}

A \Dfn{singleton} in the inversion sequence  $(x_1,x_2,\dotsc ,x_n)$
is an $i\in [n]$ such that $x_i=i$.

\begin{prop} For $n\geqslant 0$ there is a bijection between
set partitions of $[n]$ with no enhanced 3-crossing and no
singletons and  inversion sequences of length $n$ with no weakly decreasing subsequence of length~3 and no singletons.
\end{prop}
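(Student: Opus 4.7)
The plan is to deduce this bijection from Lin's already-known bijection in \cite{MR3779614}, which establishes a correspondence between set partitions of $[n]$ with no enhanced 3-crossing and inversion sequences of length $n$ avoiding weakly decreasing subsequences of length $3$. Once this bijection is in hand, the proposition reduces to verifying that the notion of \emph{singleton} is preserved: a block $\{i\}$ of the set partition should correspond to an index $i$ with $x_i = i$ in the associated inversion sequence, and conversely.

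First, I would recall Lin's construction explicitly enough to track what happens at index $i$. In the typical approach (which proceeds through a growth-type or RSK-style insertion on the standard representation of the set partition), the value $x_i$ encodes the position at step $i$ at which the vertex $i$ is attached relative to the arcs closed so far. When $i$ is isolated in the set partition---that is, $\{i\}$ is a singleton block---no arc is opened or closed at position $i$, and Lin's rule then assigns the extremal value $x_i = i$. Conversely, $x_i = i$ forces no arc to be incident to $i$ during the construction, so $\{i\}$ must be a singleton block in the preimage. This matching of the two notions of singleton is the crux of the argument.

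Next, I would combine the singleton-preserving property with the pattern-avoidance conditions: Lin's bijection already exchanges ``no enhanced 3-crossing'' with ``no weakly decreasing subsequence of length $3$,'' so restricting to objects without singletons on both sides yields a bijection between the two subfamilies in the statement. Since all other structural features remain intact, this finishes the proof.

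The main obstacle is the singleton-correspondence step, because it depends on the concrete details of Lin's bijection rather than on any abstract property of it. If the direct verification above turned out to be delicate, an alternative route would be to compose known bijections through hesitating tableaux: a singleton $\{i\}$ corresponds to the ``do nothing twice'' step at position $i$ of the associated hesitating tableau (as in the proof of Theorem~\ref{THM:hes}), and this in turn can be traced through the encoding into inversion sequences to land on the condition $x_i = i$.
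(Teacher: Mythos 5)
Your proposal is correct and follows essentially the same route as the paper: the paper's proof simply invokes the known bijection (it cites Yan~\cite{MR3168516} rather than Lin~\cite{MR3779614}) between set partitions with no enhanced 3-crossing and inversion sequences with no weakly decreasing subsequence of length~3, and then observes that this bijection preserves singletons. Your additional discussion of why a singleton block $\{i\}$ corresponds to $x_i=i$ is more detail than the paper gives, but it is the same key observation.
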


\begin{proof} There is a bijection in \cite{MR3168516}
between set partitions of $[n]$ with no enhanced 3-crossing
and  inversion sequences of length $n$ with no weakly decreasing subsequence of length 3. 
This bijection preserves singletons.
\end{proof}

\subsection{Algebra}\label{SUBSEC:algebra}
In this subsection, we give three proofs of Theorem~\ref{thm:rc}, with
different flavors. The first two proofs utilize Theorem~\ref{thm:bt} and a result
of Bousquet-M\'elou and Xin~\cite{Bousquet-Melou2005} on partitions that avoid
3-crossings. The last one is a direct proof which relies on the connection with $G_2$ walks.

The first proof is based on Theorem~\ref{thm:bt}, on~Proposition~2
in~\cite{Bousquet-Melou2005} and on the method of creative
telescoping~\cite{Zeilberger1991, KoutschanThesis} for the summation of
(bivariate) holonomic sequences.

\begin{prop}\label{main-thm-H}\cite[Proposition 2]{Bousquet-Melou2005}
The number $E_3(n)$  of partitions of $[n]$ having no enhanced
$3$-crossing is given by $ E_3\left( 0 \right)=E_3\left( 1 \right)
=1,$ and for
$n\ge 0$,
\begin{multline}
8\left( n+3 \right)  \left( n+1 \right) E_3 \left( n \right) +
 \left( 7{n}^{2}+53n+88 \right) E_3 \left( n+1 \right) \\
 - \left( n+8
 \right)  \left( n+7 \right) E_3\left( n+2 \right)=0.
\end{multline}
Equivalently, the associated generating function $\mathcal E(t)=\sum_{n\ge 0} E_3(n)t^n$
satisfies
\begin{multline*}\label{e-dfinite-H}
{t}^{2} \left(1+ t \right)  \left(1- 8t \right) {\frac
{d^{2}}{d{t} ^{2}}}\mathcal E ( t )
+2t  \left( 6-23t-20{t}^{2} \right) {\frac {d}{dt}}\mathcal E ( t ) \\
+6 \left(5-7t -4{t}^{2}\right) \mathcal E ( t ) -30=0.
\end{multline*}
\end{prop}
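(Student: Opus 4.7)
My plan is to exploit the lattice-walk interpretation in order to reduce the proof to a concrete functional equation amenable to the kernel method. By the bijection between set partitions of $[n]$ with no enhanced 3-crossing and hesitating tableaux of height 2 with empty shape (Lin~\cite{MR3779614}, see also Marberg~\cite{Marberg2012}), and by the walk interpretation of hesitating tableaux of height 2 recalled in \S\ref{SUBSEC:latticewalks}, the quantity $E_3(n)$ equals the number of length-$n$ excursions in the wedge $W = \{(x,y) \in \bZ^2 : x \geq y \geq 0\}$ starting and ending at the origin, with the eight steps of Figure~\ref{fig:stepswk} and the constraint that the ``second-row'' zero step is forbidden on the diagonal $x=y$.

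First I would introduce the complete generating function
\[ F(t;u,v) = \sum_{n \geq 0} t^n \sum_{(x,y) \in W} f_n(x,y)\, u^x v^y, \]
where $f_n(x,y)$ counts walks of length $n$ from $(0,0)$ to $(x,y)$ in $W$. A one-step decomposition then gives a linear functional equation of the form $K(t;u,v)\,F(t;u,v) = 1 + R(t;u,v)$, where the kernel $K(t;u,v)$ is built from the step polynomial and the boundary-correction term $R$ is a linear combination of the two wall sections $F(t;u,0)$ and $F(t;u,u)$ together with a ``diagonal section'' tracking the walks that sit on $x=y$ (to implement the forbidden zero step there).

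Next I would apply the algebraic kernel method: since the step polynomial has a small symmetry group relating the two coordinates, one can substitute pairs $(u,v^*(u))$ that annihilate $K$ and combine the resulting specialisations to eliminate the boundary unknowns, yielding a closed-form expression for the origin series $\sum_{n \geq 0} E_3(n)\, t^n = [u^0 v^0]\, F(t;u,v)$ as a contour integral or an explicit hypergeometric-type sum. A routine creative-telescoping calculation (following Zeilberger~\cite{Zeilberger1991} or Koutschan~\cite{KoutschanThesis}) then produces the second-order recurrence for $E_3(n)$ and, dually, the second-order linear ODE for $\mathcal E(t)$, the inhomogeneous constant $-30$ arising from the initial conditions $E_3(0)=E_3(1)=1$ (indeed, evaluating the operator on $\mathcal E(t)$ at $t=0$ must return $30 \cdot \mathcal E(0) = 30$).

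The main obstacle will be the non-standard boundary condition that forbids one of the two zero steps precisely on the diagonal $x=y$: this breaks the uniform step structure underlying the classical kernel method for quadrant walks and forces one to carry the extra diagonal section throughout the computation, making the resulting linear algebra on boundary sections heavier. As a fallback, I would invoke the reflection principle of Gessel--Zeilberger applied to the Weyl group of $G_2$: summing signed counts of unrestricted length-$n$ walks over the twelve Weyl-group images of the origin expresses $E_3(n)$ as an explicit alternating sum of multinomial-type coefficients, to which Zeilberger's algorithm can then be applied directly to recover the stated recurrence.
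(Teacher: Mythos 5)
The first thing to note is that the paper does not prove this proposition at all: it is imported verbatim as Proposition~2 of Bousquet-M\'elou and Xin~\cite{Bousquet-Melou2005} and used as a black box in the first two proofs of Theorem~\ref{thm:rc}. So the only meaningful comparison is with the proof in that reference, and your outline does correctly identify its overall architecture: pass to hesitating tableaux of height~2, view them as walks in the wedge $x\geqslant y\geqslant 0$, set up a linear functional equation, eliminate the boundary sections using symmetries of the kernel, and finish with creative telescoping. At that level of resolution the plan is sound, and your sanity check on the inhomogeneous constant $-30$ is correct.

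However, the sketch defers exactly the points where the argument is delicate, and two of them are genuine gaps as written. First, the fallback via the Gessel--Zeilberger reflection principle does not apply directly to the eight compound steps of Figure~\ref{fig:stepswk}: that step set is not invariant under the order-$8$ reflection group of the wedge (for instance $(-1,1)\mapsto(-1,-1)$ under $y\mapsto -y$), and even at the level of unit moves a reflection turns ``add a cell'' into ``remove a cell'', destroying the constraint that each pair of half-moves be of one of the three hesitating types; so the ``explicit alternating sum of multinomial-type coefficients'' is not available off the shelf. Second, in your primary route you propose to carry an extra diagonal section to encode the forbidden zero step on $x=y$ and to eliminate it, together with the two wall sections, by substituting roots of the kernel; but you have not identified the group of the compound step polynomial (it is not the full hyperoctahedral group) nor checked that it supplies enough independent substitutions to eliminate three unknown sections. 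The clean resolution --- and the one underlying~\cite{Bousquet-Melou2005} --- is to split each hesitating step into its two unit moves and require the walk to stay in the wedge after every half-step; the forbidden diagonal step then disappears automatically, because its intermediate point $(x,x+1)$ violates $\lambda_1\geqslant\lambda_2$, and the functional equation involves only standard wall sections. With that modification, plus the routine remark that a recurrence produced by creative telescoping must be certified (or checked against sufficiently many initial terms of a sequence already known to be P-recursive), your plan becomes a faithful reconstruction of the cited proof.
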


\begin{proof}[First Proof of Theorem~\ref{thm:rc}]
By Theorem~\ref{thm:bt}, we have 
\[ T_3(n) = \sum_{k = 0}^n (-1)^{n - k} \binom nk E_3(k). \]
Set $f(n, k) = (-1)^{n - k} \binom nk E_3(k)$. By
Proposition~\ref{main-thm-H}, and by the closure properties for holonomic
functions, it follows that $f(n, k)$ is holonomic. Thus, we can apply Chyzak's
algorithm~\cite{KoutschanThesis} for creative telescoping to derive a
recurrence relation for~$T_3$. In particular, using Koutschan's
Mathematica package {\tt HolonomicFunctions.m}~\cite{Christoph2010} that
implements Chyzak's algorithm, we find exactly the recurrence equation in
Theorem~\ref{thm:rc}.

\end{proof}
\noindent The detailed calculation involved in the above proof can be found
in~\cite{ElectronicYZ}.

\medskip 
The second proof is also based on Proposition~\ref{main-thm-H}
and on Theorem~\ref{thm:bt}, namely on the relation between the generating
functions of $T_3(n)$ and of $E_3(n)$ implied by Theorem~\ref{thm:bt}.

\begin{proof}[Second Proof of Theorem~\ref{thm:rc}] Let $\mathcal T(t) = \sum_{n \geq 0} T_3(n) t^n$. 
By Theorem~\ref{thm:bt} and Lemma~\ref{lem:btg},
\[ \mathcal T(t) = \frac{1}{1 + t} \cdot \mathcal E\left( \frac{t}{1 + t} 
\right). \]
We know from Proposition~\ref{main-thm-H} a differential equation for
$\mathcal E\left( t \right)$. By (univariate) closure properties of D-finite
functions, we deduce a differential equation for $\mathcal T(t)$, and convert
it into a linear recurrence for $T_3(n)$, which is exactly the recurrence in
Theorem~\ref{thm:rc}. \end{proof}

Let $G$ be a reductive complex algebraic group and $V$ be a representation of~$G$. 
If $G$ is connected then the sequence~$\ba_V$ associated to $(G, V)$ can sometimes be
written as an algebraic residue. The general principle is discussed by Gessel and Zeilberger in~\cite{MR1092920}
and in detail by Grabiner and Magyar in~\cite{Grabiner1993}.

Let $K$ be the character of $V$ and let $\Delta$ be given by
\[ \Delta = \sum_{w\in W} \varepsilon(w)\left[ w(\rho)-\rho \right] . \]
Here $W$ is the Weyl group, $\varepsilon\colon W\to \{\pm 1\}$ is the sign character and $\rho$
is half the sum of the positive roots. These are both elements of the group ring of the root lattice of $G$.
If we choose a basis of the root lattice then these become Laurent polynomials where the number of
indeterminates is the rank of $G$. 

For the representations that appear in this paper, the $n$-th term of the sequence~$\ba_V$ is the constant term in
the Laurent polynomial $\Delta\,K^n$ and the generating function is the algebraic residue of the
rational function $\Delta/(x y-t x y K)$.

\begin{ex}
In Example~\ref{ex:A1}, the Laurent polynomials are
\[ \Delta = (1-x^{-2}), \qquad K = (x-x^{-1}).\]
Then the constant term of the Laurent polynomial 
\[ (1-x^{-2})(x-x^{-1})^{2n} \]
is
\[ \binom{2n}{n}-\binom{2n}{n+2}, \]
which is a well known expression for the $n$-th Catalan number~$C_n$. This can be proved directly
using the reflection principle.
\end{ex}

It is a general result that an algebraic residue is D-finite, see \cite{MR929767}. This shows that the generating functions
of these sequences are D-finite and hence the sequences are P-recursive. 

The third proof of Theorem~\ref{thm:rc} relies on~$G_2$ walks and the method
in~\cite{MR3588720}.

The following definition is given in \cite{MR1265145} in the paragraph
following Conjecture~3.3.

\begin{defn}\label{def:W}
The $n$-th term $T_3(n)$ is the constant term of the Laurent polynomial
$\Delta\,K^n$, where
\[ K = 1 + x + y + x\,y + x^{-1} + y^{-1} + (xy)^{-1}, \]
and $\Delta$ is the Laurent polynomial
\begin{multline*}
\Delta = x^{-2}y^{-3}(x^2y^3 - xy^3 + x^{-1}y^2 - x^{-2}y +x^{-3}y^{-1} - x^{-3}y^{-2}\\ + x^{-2}y^{-3} - x^{-1}y^{-3} + xy^{-2} - x^2y^{-1} + x^3y - x^3y^2).
\end{multline*}
\end{defn}
\begin{proof}[Third Proof of Theorem~\ref{thm:rc}] 
Let $\mathcal T(t) = \sum_{n \geq 0} T_3(n) t^n$. By Definition~\ref{def:W},
$\mathcal T(t)$ is the constant coefficient $[x^0 y^0]$ of $\Delta/(1-tK)$. In
other words, $\mathcal T(t)$ is equal to the algebraic residue of
$\Delta/(xy-txyK)$, and thus it is D-finite~\cite{MR929767} and is annihilated by a
linear differential operator that cancels the contour integral of
$\Delta/(xy-txyK)$ over a cycle. Using the integration algorithm for multivariate
rational functions in~\cite{Dwork} we compute the following operator of
order~6, that cancels $\mathcal T(t)$:
\begin{align*}
L_6 = {t}^{5} \left( t+1 \right)  \left( 7\,t -1\right)  \left( 2\,t+1 \right) ^{2}\partial^{6}+ \\
3\,{t}^{4} \left( 2\,t+1 \right) 
 \left( 168\,{t}^{3}+211\,{t}^{2}+40\,t-11 \right) \partial^{5}+ \\
6\,{t}^{3} \left( 2100\,{t}^{4}+3475\,{t}^{3}+1616\,{t}^{2}+
79\,t-61 \right) \partial^{4}+ \\
6\,{t}^{2} \left( 11200\,{t}^{4}+17400\,{t}^{3}+7556\,{t}^{2}+268\,t-273 \right) \partial^{3}+ \\
36\,t \left( 4200\,{t}^{4}+6100\,{t}^{3}+2442\,{t}^{2}+54\,t-77 \right) \partial^{2} +\\
36\, \left( 3360\,{t}^{4}+4540\,{t}^{3}
+1646\,{t}^{2}+16\,t-35 \right) \partial+\\
20160\,{t}^{3}+25200\,{t}^{2}+8064\,t,
\end{align*}
where $\partial = \frac{\partial}{\partial t}$ denotes the derivation operator
with respect to~$t$.

The operator $L_6$ factors as $L_6 = Q L_3$, where
\[
Q
=
 \left( 2\,t+1 \right) {t}^{3}{\partial}^{3}+ \left( 24\,t+13 \right) {t}^{2}{\partial}^{2}+6\, \left( 12\,t+7 \right) t{\partial}+48\,t+30,
\]
and
\begin{multline} \label{EQ:L3}
L_3 = {t}^{2} \left( 2\,t+1 \right)  \left( 7\,t -1\right)  \left( t+1 \right) {\partial}^{3}+ \\ 2\,t \left( t+1 \right)  \left( 63\,{t
}^{2}+22\,t-7 \right) {\partial}^{2}+ 	 \\ 
 \left( 252\,{t}^{3}+338\,{t}^{2}+36\,t-42 \right) {\partial}+28\,t \left( 3\,t+4 \right). 
\end{multline}
This shows that $f(t) := L_3(\mathcal T(t))$ is a solution of the differential
operator $Q$. Hence, by denoting $f(t) = \sum_{n \geq 0} f_n t^n$, one deduces
that for all $n\geq 0$ we have \[2 \left( n+2 \right) f_n + \left( n+6 \right)
f_{n+1} = 0. \] One the other hand, from $\mathcal T(t) = 1+t^2 + O(t^3)$, it
follows that~$f_0=0$, therefore $f(t) = 0$, in other words $\mathcal T(t)$ is
also a solution of~$L_3$. From there, deducing the recurrence relation of
Theorem~\ref{thm:rc} is immediate. 

\end{proof}

\subsection{Closed formulae} \label{SUBSEC:closedformulae}
In this subsection, we give two closed formulae for the generating function $\mathcal T(t)$  of  sequence~$T_3$ (\oeis{A059710})  in terms of 
classical Gaussian hypergeometric functions ${}_2F_1$.

The operator $L_3$ in~\eqref{EQ:L3} factors as $L_3 = L_2 L_1$, where
\begin{align*}
L_2 =
{t}^{2} \left( 2\,t+1 \right)  \left( 7\,t - 1\right)  \left( t+1 \right) {{\partial}}^{2}+{\frac {t (t+1) P_1 }P} {\partial}+
{\frac {
P_2}{P^2}},
\end{align*}
and
\begin{align*}
L_1 = 
\partial - \frac{\frac{\rm d}{dt} (P/t^5)}{P/t^5}
\end{align*}
with
\[
P_1 = 
3136\,{t}^{6}+7560\,{t}^{5}+5744\,{t}^{4}+1592\,{t}^{3}-90\,{t}^{2}-131\,t-9,
\]
\begin{align*}
P_2=
131712\,{t}^{11}+719712\,{t}^{10}+1626800\,{t}^{9}+2014088\,{t}^{8}+1498264\,{t}^{7}+\\ 665620\,{t}^{6}+ 146508\,{t}^{5}-4560\,{t}^
{4}-11138\,{t}^{3}-2663\,{t}^{2}-244\,t-7
\end{align*}
and
\[
P=
{28\,{t}^{4}+66\,{t}^{3}+46\,{t}^{2}+15\,t+1}.
\]
Clearly, the operator $L_1$ has a basis of solutions formed of $\{ P/t^5\}$.
Let $\{ f_1, f_2 \}$ be a basis of solutions of $L_2$. Then one can show that the solution $\mathcal T(t)$  of $L_3$ is 
\begin{equation*} 
\frac{P}{t^5}
\cdot
\int (C_1 f_1 + C_2 f_2) \frac{t^5}{P} dt,
\end{equation*}
for some constants $C_1$ and $C_2$ to be determined.

Using algorithms for solving second order differential equations, as described
in~\cite{MR3588720}, one deduces that $f_1$ and $f_2$ have hypergeometric
expressions. After identifying the constants $C_1$ and $C_2$, we derive the following explicit formula for $\mathcal T(t)$.
Define the polynomials:
\begin{gather*}
S = (7\,t-1)\,(t+1)\,(2\,t+1)^2, \\
U = (11\,t-1)\,(46\,t^3-78\,t^2+15\,t-1),
\end{gather*}
and
\begin{multline*}
V =  11870\,t^7-6934\,t^6-13371\,t^5+1115\,t^4 \\
+1112\,t^3-300\,t^2+29\,t-1,
\end{multline*}
as well as the rational function
\begin{equation*}
\phi = \frac{27\,(t+1)\,t^2}{(1-t)^3}.
\end{equation*}
Then the generating function $\mathcal T(t)$ is equal to
\begin{equation*}
\setlength\arraycolsep{1pt}
\frac P{30 \, t^5}\int\frac{S}{P^2\,(t-1)^2}\left[
U\, {}_2F_1\left(\begin{matrix}\frac{1}{3}& &\frac{2}{3}\\&1&
\end{matrix};\phi\right) +
\frac{V}{(1-t)^3}\,{}_2 F_1\left(\begin{matrix}\frac{4}{3}& &\frac{5}{3}\\&2&
\end{matrix};\phi\right)
\right]dt.
\end{equation*}
Note that the above indefinite integrals have no constant term. This expression can be further simplified by introducing
the additional rational functions
\[ R_1 = 
{\frac { \left( t+1 \right) ^{2} \left( 214\,{t}^{3}+45\,{t}^{2}+60\,t+5 \right) }{t-1}}
\]
and
\[
R_2 = 
6\,{\frac { {t}^{2} \left( t+1 \right) ^{2} \left( 101\,{t}^{2}+74\,t+5 \right) }{ \left( t-1 \right) ^{2}}}.
\]
Then the simpler expression for $\mathcal T(t)$ is
\begin{equation*}
\mathcal T(t) = 
\setlength\arraycolsep{1pt}
\frac {1}{30 \, t^5} \left[
R_1\cdot {}_2F_1\left(\begin{matrix}\frac{1}{3}& &\frac{2}{3}\\&2&
\end{matrix};\phi\right) +
R_2 \cdot {}_2 F_1\left(\begin{matrix}\frac{2}{3}& &\frac{4}{3}\\&3&
\end{matrix};\phi\right) + 5 \, P
\right].
\end{equation*}

Following the approach in~\cite[\S3.3]{MR3588720}, one can obtain an
alternative expression for $\mathcal T(t)$ by using an approach with a more geometric flavor. 
The key point is that
the denominator of $W/(xy-txyK)$ is a family of \emph{elliptic} curves, thus
integrating $W/(xy-txyK)$ over a small torus amounts to computing the periods
of the two forms (of the first and second kind). Working out the details, this
approach yields an expression for $\mathcal T(t)$ in terms of the Weierstrass invariant
\[ g_2 = \left( t-1 \right)  \left( 25\,{t}^{3} + 21\,{t}^{2} + 3\,t - 1 \right) \]
and of the $j$-invariant
\[
J = 
{\frac { \left( t-1 \right)^3  \left( 25\,{t}^{3} + 21\,{t}^{2} + 3\,t - 1 \right) ^{3}}{{t}^{6} \left( 1-7\,t \right)  \left( 2
\,t+1 \right) ^{2} \left( t+1 \right) ^{3}}}
\]
of our family of curves. As in~\cite{MR3588720}, we introduce the expression 
\[
\setlength\arraycolsep{1pt}
H(t)  = 
\frac {1}{{g_2}^{1/4}} \cdot 
{}_2 F_1\left(\begin{matrix}\frac{1}{12}& &\frac{5}{12}\\& 1 &
\end{matrix};\frac{1728}{J}\right).
\]

Then the generating function $\mathcal T(t)$ is equal to
\begin{align*}
	\frac{P}{6 \, t^5} + 
\frac{	\left( 7\,t - 1\right) 
	 \left( 2\,t+1 \right)  \left( t+1 \right)}{360 \, t^5} 
	   \Big(  \left( 155\,{t}^{2}+182\,t+59 \right) \left( 11\,t+1 \right)  {\it H} \left( t \right)+ \\
 \left( 341\,{t}^{3}+507\,{t}^
	{2}+231\,t+1 \right) \left( 5\,t+1 \right)  H'(t)
\Big).
\end{align*}	

Moreover, one can use the methods in~\cite{MR3588720} to deduce that the generating
function $\mathcal T(t)$ is a transcendental power series and that the asymptotic
behavior of its $n$-th coefficient $T_3(n)$ is \[ T_3(n) \sim C\cdot \frac{7^n}{n^7}, \quad \text{where} \;
C = \frac{4117715}{864}{\frac {\sqrt {3}}{\pi}} \approx 2627.6.\] 

\section{Quadrant sequences} \label{SEC:quadrant}
In this section we consider a  family of sequences associated to the representation theory of the
simple algebraic group $SL(3)$ by the construction Definition~\ref{DEF:seq}.
The algebraic group $SL(3)$ has dimension 8 and rank 2. Let $V$ be the three-dimensional
vector representation and $V^*$ be the dual representation. These are the two fundamental
representations of $SL(3)$.

\begin{defn}\label{defn:quad}
For $k\geqslant 0$, the quadrant sequences $\mathcal{S}_k$ are the
sequence associated to $(SL(3), V\oplus V^*\oplus k\;\bC)$.
\end{defn}
By Lemma~\ref{LEM:binomialrep}, those sequences are also
related by binomial transforms.

By the general theory in Proposition~\ref{prop:ic} these sequences enumerate
walks in the weight lattice of $SL(3)$. Equivalently, these are walks in the
positive quadrant which is our reason for calling them \emph{quadrant sequences}.

The $n$-th term of the sequence $\mathcal{S}_0$ enumerates $A_2$-webs with $n$ boundary points,
see \cite{Kuperberg1997}. If we use the representation theory of $GL(3)$ (instead of $SL(3)$)
then we can translate this to a combinatorial interpretation in terms of rectangular semistandard
tableaux with three rows.

Let $\mathrm{SST}(\lambda;\alpha)$ be the set of semistandard tableaux of shape $\lambda$
and weight $\alpha$. Here $\lambda$ is a partition, $\alpha$ is a composition and 
$|\lambda|=|\alpha|$.

\begin{prop}\label{prop:sst}
The first three quadrant sequences have the following interpretations in terms of rectangular semistandard tableaux.
\begin{enumerate}
\item The $n$-th term of the sequence $\mathcal{S}_0$ is the number of rectangular semistandard tableaux
with three rows whose weight, $\alpha$,
is a composition of length $n$ such that $\alpha_i\in\{1,2\}$ for $1\leqslant i\leqslant n$.

\item The $n$-th term of the sequence $\mathcal{S}_1$ is the number of rectangular semistandard tableaux with three rows whose weight, $\alpha$,
is a composition of length $n$ such that $\alpha_i\in\{0,1,2\}$ for $1\leqslant i\leqslant n$.

\item The $n$-th term of the sequence $\mathcal{S}_1$ is the number of rectangular semistandard tableaux with three rows whose weight, $\alpha$,
is a composition of length $n$ such that $\alpha_i\in\{1,2,3\}$ for $1\leqslant i\leqslant n$.

\item The $n$-th term of the sequence $\mathcal{S}_2$ is the number of rectangular semistandard tableaux with three rows whose weight, $\alpha$,
is a composition of length $n$ such that $\alpha_i\in\{0,1,2,3\}$ for $1\leqslant i\leqslant n$.
\end{enumerate}
\end{prop}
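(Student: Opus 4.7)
The plan is to derive all four parts of Proposition~\ref{prop:sst} from a single iterated Pieri computation, exploiting that the trivial $SL(3)$-representation admits two equivalent $GL(3)$-descriptions. First I would expand $(V\oplus V^*\oplus k\,\bC)^{\otimes n}$ as a direct sum indexed by the choice at each of the $n$ tensor positions of which summand is used. Viewing these as $GL(3)$-representations and using that determinant twists act trivially on $SL(3)$, one has the $SL(3)$-identifications
\begin{equation*}
V \simeq V_{[1,0,0]}, \qquad V^* \simeq V_{[1,1,0]}, \qquad \bC \simeq V_{[0,0,0]} \ \text{and}\  \bC \simeq V_{[1,1,1]}.
\end{equation*}
By the iterated Pieri rules recalled at the beginning of \S\ref{SEC:quadrant}, the multiplicity of $V_\lambda$ in such a tensor product equals the number of chains $\emptyset=\lambda^{(0)}\subset\cdots\subset\lambda^{(n)}=\lambda$ of partitions with at most three rows in which $\lambda^{(i)}/\lambda^{(i-1)}$ is a vertical strip of a size $\alpha_i$ determined by the choice at step~$i$: size $1$ for $V$, size $2$ for $V^*$, and either size $0$ (using $\bC\simeq V_{[0,0,0]}$) or size $3$ (using $\bC\simeq V_{[1,1,1]}$) for each copy of $\bC$.

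Next I would restrict to $SL(3)$-invariants: as observed in the $SL(3)$ example of \S\ref{SEC:inv}, the restriction of $V_\lambda$ to $SL(3)$ is the trivial representation exactly when $\lambda$ is a three-row rectangle $(m,m,m)$, so summing the chain count above over all such $m$ yields $\mathcal S_k(n)$. I would then repackage each chain as a semistandard tableau of shape $(m,m,m)$ by labeling every cell with the index of the step at which it was added: since each increment is a vertical strip, no two cells added at the same step lie in the same row, so the labels are strictly increasing along each row, and they are weakly increasing down each column because $\lambda^{(i-1)}\subset\lambda^{(i)}$ is a containment of partitions. By construction, the composition $\alpha=(\alpha_1,\dotsc,\alpha_n)$ of strip sizes is the weight of this tableau, and the four admissible sets of step sizes give the four parts: $\{1,2\}$ for $\mathcal S_0$, $\{0,1,2\}$ for $\mathcal S_1$ under $\bC\simeq V_{[0,0,0]}$, $\{1,2,3\}$ for $\mathcal S_1$ under $\bC\simeq V_{[1,1,1]}$, and $\{0,1,2,3\}$ for $\mathcal S_2$ by assigning one identification to each of the two copies of $\bC$.

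The main obstacle I anticipate is keeping the conventions straight: a vertical strip may contain two cells in the same column, which is what forces the columns of the filling to be only weakly rather than strictly increasing; and one has to verify that the two distinct $GL(3)$-descriptions of the trivial representation really produce the same count, accounting for the equivalent constraint sets $\{0,1,2\}$ and $\{1,2,3\}$ both being attached to $\mathcal S_1$ in parts~(2) and~(3). Once those points are settled, the chain-to-tableau bijection reduces all four claims to a single enumeration parameterized by the admissible step sizes.
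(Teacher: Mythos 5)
Your proof is correct and follows essentially the same route as the paper's: the paper likewise regards a semistandard tableau as a chain of partitions and matches it with the walk/Pieri-rule description of the quadrant sequences, merely compressing your explicit tensor-power and invariant-restriction argument into the one-line remark that such tableaux biject with the lattice walks already known to count $\mathcal{S}_k$. Your flag about conventions is well taken and worth keeping: the dual Pieri rule produces row-strict, column-weak fillings of the three-row rectangle $(m,m,m)$, i.e., transposes of semistandard tableaux of the $m\times 3$ rectangle $3^m$ appearing in Proposition~\ref{prop:enum}, a rows-versus-columns subtlety that the paper's terse proof (and its phrase ``three rows'') leaves implicit.
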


\begin{proof} There are easy bijections with the respective walks. A semistandard tableaux can be regarded
as a sequence of partitions where the $i$-th partition is the shape given by taking the cells whose entry is at most $i$.
For a semistandard tableau with (at most) three rows this gives a sequence of vectors $(\lambda_1,\lambda_2,\lambda_3)$
such that $\lambda_1\geqslant \lambda_2\geqslant \lambda_3\geqslant 0$. Each such vector maps to the point
$(\lambda_1-\lambda_2,\lambda_2-\lambda_3)$. This constructs a map from semistandard tableaux with three rows to walks
in the two-dimensional lattice which stay in the non-negative quadrant. This map gives all bijections.
\end{proof}

\begin{prop}\label{prop:enum} The first three quadrant sequences have the following expressions. 

\begin{enumerate}
\item The $n$-th term of the sequence $\mathcal{S}_0$ is
\begin{equation*}
    \sum_{m\geqslant 0}\sum_{\substack{a,b,c\geqslant 0\\a+b=n\\ a+2b=3m}}%
   \binom{n}{a,b} \left| \mathrm{SST}(3^m;1^a2^b)\right|
\end{equation*}

\item The $n$-th term of the sequence $\mathcal{S}_1$ is
\begin{equation*}
    \sum_{m\geqslant 0}\sum_{\substack{a,b,c\geqslant 0\\a+b+c=n\\ b+2c=3m}}%
   \binom{n}{a,b,c} \left| \mathrm{SST}(3^m;0^a1^b2^c)\right|
\end{equation*}

\item The $n$-th term of the sequence $\mathcal{S}_1$ is
\begin{equation*}
    \sum_{m\geqslant 0}\sum_{\substack{a,b,c\geqslant 0\\a+b+c=n\\ a+2b+3c=3m}}%
   \binom{n}{a,b,c} \left| \mathrm{SST}(3^m;1^a2^b3^c)\right|
\end{equation*}

\item The $n$-th term of the sequence $\mathcal{S}_2$ is
\begin{equation*}
    \sum_{m\geqslant 0}\sum_{\substack{a,b,c,d\geqslant 0\\a+b+c=n\\ b+2c+3d=3m}}%
   \binom{n}{a,b,c,d} \left| \mathrm{SST}(3^m;1^b2^c3^d)\right|
\end{equation*}
\end{enumerate}
\end{prop}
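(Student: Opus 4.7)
The plan is to deduce each of the four formulae from the corresponding item of Proposition~\ref{prop:sst} by partitioning the set of semistandard tableaux according to the multiplicities of the possible weight values. The only ingredient beyond Proposition~\ref{prop:sst} will be the classical symmetry of the Kostka number $K_{\lambda,\alpha}$ under permutation of the entries of the weight $\alpha$, which is equivalent to the fact that the Schur function $s_{\lambda}$ is symmetric in its variables.

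For item (1), I would start from the identification in Proposition~\ref{prop:sst}(1): $\mathcal{S}_0(n)$ counts pairs $(\alpha,T)$ where $\alpha$ is a composition of length $n$ with $\alpha_i\in\{1,2\}$ and $T\in\mathrm{SST}(3^m;\alpha)$ for some $m\geqslant 0$. I would then stratify by $a=\#\{i:\alpha_i=1\}$ and $b=\#\{i:\alpha_i=2\}$. The equalities $a+b=n$ and $|3^m|=a+2b=3m$ give the constraints appearing under the sum. The number of compositions of length $n$ with these prescribed multiplicities is the multinomial coefficient $\binom{n}{a,b}$, and by the Kostka symmetry the count of SSYT of shape $3^m$ and weight $\alpha$ depends only on the multiset of parts of $\alpha$, hence equals $|\mathrm{SST}(3^m;1^a 2^b)|$ for every valid ordering. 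Summing over $(m,a,b)$ then produces the stated formula.

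Items~(2)--(4) would follow by the same stratification, now with one extra summation index for each additional value allowed in the weight alphabet: variables $(a,b,c)$ counting the occurrences of $0,1,2$ in item~(2), of $1,2,3$ in item~(3), and $(a,b,c,d)$ counting the occurrences of $0,1,2,3$ in item~(4). In each case the constraints $\mathrm{length}(\alpha)=n$ and $|\alpha|=3m$ translate into the two conditions displayed under the sums, namely $b+2c=3m$, $a+2b+3c=3m$, and $b+2c+3d=3m$ respectively. For the items in which $0$ is an allowed weight value, a coordinate $\alpha_i=0$ contributes neither to the shape size nor to the standardized weight, so $|\mathrm{SST}(3^m;0^a 1^b 2^c)|=|\mathrm{SST}(3^m;1^b 2^c)|$ and likewise in item~(4).

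I do not expect a serious obstacle: the argument amounts to a reorganization of the count from Proposition~\ref{prop:sst} using one classical fact about Kostka numbers. The only point requiring mild care is the bookkeeping of the constraints, i.e.\ checking that the length condition $a+b+\cdots=n$ and the size condition $\sum_{j} j\cdot(\text{number of }\alpha_i\text{ equal to }j)=3m$ coincide with the conditions displayed in each item of the proposition.
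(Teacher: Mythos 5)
Your argument is correct and is essentially the paper's own proof: the authors likewise deduce each formula from Proposition~\ref{prop:sst} combined with the invariance of $|\mathrm{SST}(\lambda;\alpha)|$ under reordering of $\alpha$, with the multinomial coefficient counting the orderings. Your additional bookkeeping of the constraints and the remark about zero parts just make explicit what the paper leaves implicit.
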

\begin{proof}
These follow directly from Proposition~\ref{prop:sst} together with the result that
$|\mathrm{SST}(\lambda;\alpha)|=|\mathrm{SST}(\lambda;\alpha')|$ if $\alpha'$ is a reordering of $\alpha$.
\end{proof}

Note that it is clear from these expressions that $\mathcal{S}_1$ is the binomial transform of $\mathcal{S}_0$
and that $\mathcal{S}_2$ is the binomial transform of $\mathcal{S}_1$.
This uses the observation that $\left|\mathrm{SST}(3^m;1^a2^b)\right|=\left|\mathrm{SST}(3^{m+c};1^a2^b3^c)\right|$
for all $a,b,c,m\geqslant 0$ such that $a+2b=m$.

\begin{thm} \label{THM:quadrantid} 
The quadrant sequences $\mathcal{S}_0, \mathcal{S}_1, \mathcal{S}_2, \mathcal{S}_3$
are identical to the sequences in the second family specified in Figure~\ref{fig:bx}.
\end{thm}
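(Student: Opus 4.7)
The plan is to identify $\mathcal{S}_0$ with the first OEIS entry A151366 using the lattice-walk description of invariant tensors, and then to obtain $\mathcal{S}_1,\mathcal{S}_2,\mathcal{S}_3$ as iterated binomial transforms via Lemma~\ref{LEM:binomialrep} and match each one with the corresponding entry in Figure~\ref{fig:bx}.

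For the base case, Proposition~\ref{prop:ic} reduces $\mathcal{S}_0(n)$ to the number of invariant words of length~$n$ in the crystal of $V\oplus V^*$. The crystal tensor-product rule, combined with the $SL(3)$ Pieri rule recalled before Definition~\ref{defn:quad} (sending $[\lambda_1,\lambda_2,\lambda_3]$ to $(\lambda_1-\lambda_2,\lambda_2-\lambda_3)$), places these in bijection with lattice walks of length~$n$ in $\mathbb{Z}^2$ that start and end at the origin, remain in the closed first quadrant, and use as step set the six nonzero weights of $V\oplus V^*$. This is exactly the model defining A151366 in \cite{MR2681853}, so $\mathcal{S}_0 = \text{A151366}$.

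For the remaining three sequences, Lemma~\ref{LEM:binomialrep} gives $\mathcal{S}_k = \bt^k\mathcal{S}_0$; equivalently, by Lemma~\ref{LEM:binomialwalks}, $\mathcal{S}_k$ enumerates the same walks augmented by $k$ unconstrained zero-steps. To identify $\bt^k\mathcal{S}_0$ with its candidate in Figure~\ref{fig:bx}, I would compute a P-recursive recurrence for $\bt^k\mathcal{S}_0$ via creative telescoping, mirroring the approach used in the proofs of Theorem~\ref{thm:rc} in \S\ref{SUBSEC:algebra}, and match it against the recurrences recorded for A236408 and A216947 in the OEIS and in \cite{Marberg2012}. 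For the Baxter case, the cleanest alternative is to invoke the axis-walk characterization of Baxter numbers from \cite{MR3738145, Burrill2015, Yan2020} together with the branching-rule interpretation developed in the final section of the paper, which identifies $\mathcal{S}_2$ with axis walks of the appropriate type. Matching initial conditions then concludes each identification, since a P-recursive sequence is determined by its recurrence and finitely many initial terms.

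The main obstacle is the Baxter case. No direct bijection between $SL(3)$ quadrant walks (augmented by two zero-steps) and Baxter permutations is apparent, so the comparison with A001181 forces one to pick one of the two less direct strategies above: either compute a minimal-order P-recursive recurrence on each side and verify their coincidence, or route through the axis-walk interpretation and use the branching rule for $SL(3)\subset G_2$. The remaining two identifications are routine once the base case is settled and creative telescoping is available.
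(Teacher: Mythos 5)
Your outline for $\mathcal{S}_0$ and the reduction of the remaining cases to iterated binomial transforms is sound and matches the paper's strategy, but there is one concrete gap: the identification of $\mathcal{S}_1$ with \oeis{A236408} cannot be carried out by ``matching the recurrence recorded in the OEIS,'' because that recurrence is only \emph{conjectural} for \oeis{A236408} (the paper says so explicitly just before Corollary~\ref{COR:identical}). Showing that $\bt\,\mathcal{S}_0$ satisfies the conjectured recurrence together with the right initial terms only proves that $\mathcal{S}_1$ equals the sequence generated by that recurrence, not that it equals \oeis{A236408} as actually defined. The repair is cheap: \oeis{A236408} is pinned down as the inverse binomial transform of the Baxter numbers (equivalently via the simple increasing st-maps of \S\ref{sec:maps} and Proposition~\ref{PROP:stmaps}), so once $\mathcal{S}_2$ is identified with \oeis{A001181}, the relation $\mathcal{S}_1=\bt^{-1}\mathcal{S}_2$ gives the identification with no telescoping at all. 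The same caution does not apply to \oeis{A216947}, whose recurrence is proved in \cite{Marberg2012}, so there your recurrence-matching is legitimate, though the paper prefers Marberg's direct bijection with the lattice paths.

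For the Baxter case your two routes are both viable and both are mentioned in the paper as alternatives, but the paper's primary argument is more elementary than either: Proposition~\ref{prop:enum} writes $\mathcal{S}_2(n)$ as an explicit sum of multinomials times counts of rectangular semistandard tableaux, and the known product formula for those counts turns this into exactly the Chung--Graham--Hoggatt--Kleiman formula for the Baxter numbers \cite{MR491652}. Your remark that no direct bijection is apparent is also too pessimistic: the tableau interpretation of Proposition~\ref{prop:sst} combines with \cite{Dulucq1996} to give one. Your branching-rule route is non-circular (the branching theorem rests only on the Gaskell--Sharp generating function and the externally proved axis-walk characterizations), but note that the paper runs that implication in the opposite direction, deducing Proposition~\ref{prop:hes} from the identification rather than using it as input.
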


\begin{proof}
   The sequence $\mathcal{S}_0$ appears as sequence \oeis{A151366}.
This sequence is defined in \cite{MR2681853} using lattice walks.

The sequence $\mathcal{S}_1$ appears as sequence \oeis{A236408}.
There is no published reference for this sequence and we discuss this sequence in
\S~\ref{sec:maps} below.

The sequence $\mathcal{S}_2$ appears as sequence \oeis{A001181}.
This is the sequence of Baxter permutations. These two sequences can be seen
to be the same by noting that Proposition~\ref{prop:enum} together with
the known formula for the number of rectangular partitions agrees with the
formula in \cite{MR491652} for the number of Baxter permutations.
Alternatively one can check that both sequences satisfy the same recurrence
relation and initial conditions. A bijective proof is given by noting that there is an easy bijection using the combinatorial interpretation in Proposition~\ref{prop:sst}
and \cite[Theorem~2]{Dulucq1996}. Alternative proofs are given in \cite[Proposition~23]{Burrill2015},
and \cite[Theorem~3]{MR3738145}.

The sequence $\mathcal{S}_3$ appears as sequence \oeis{A216947}.
This sequence is defined in \cite{Marberg2012} using 2-colored noncrossing set partitions.
This paper also gives an easy bijection with the lattice paths.
\end{proof}

Definition~\ref{DEF:seq} generalises to give sequences
$\ba_{V\oplus V^*\oplus k\;\bC,\lambda}$. Here $\lambda=(r,s)$ for $r,s\geqslant 0$
and the sequence enumerates lattice walks which start at $(0,0)$ and end at $(r,s)$.

\subsection{Root systems} 
The root system of $SL(3)$ is shown in Figure~\ref{fig:A2_roots} with the fundamental chamber shaded. The two simple roots are $\alpha$ and $\alpha'$. The two fundamental weights are $\lambda$ and $\lambda'$. The fundamental representations are the three-dimensional vector representation and its dual.

The element $\rho$ is
\[ \rho = \alpha+\alpha'=\lambda+\lambda'. \]
\begin{figure}
    \centering
\begin{tikzpicture}
    \foreach\ang in {60,120,...,360}{
\draw[-,green!80!black,thin] (0,0) -- (\ang:2.5cm);}
    \draw[->,red,thick] (0,0) -- (0,2);
    \node[anchor=south west] at (0,2) {$\alpha'$};
    \draw[->,red,thick] (0,0) -- (300:2cm);
    \node[anchor=south west] at (300:2cm) {$\alpha$};
    \fill[blue!20] (0,0) -- (3.5,0) -- (60:3.5cm) -- cycle;
    \draw[->,black,thick] (0,0) -- (1,0);
    \node[anchor=north west] at (1,0) {$\lambda$};
    \draw[->,black,thick] (0,0) -- (60:1cm);
    \node[anchor=south west] at (60:1cm) {$\lambda'$};
\end{tikzpicture}
    \caption{$A_2$ root system}
    \label{fig:A2_roots}
\end{figure}

All four entries in the OEIS give a recurrence relation for each quadrant sequence.
These recurrence relations all agree with, or are consequences of, the 
recurrence relations in \S\ref{SUBSEC:rec}. If the recurrence relation in the OEIS
has a proof then we can check that the initial terms agree and so we have an
alternative proof that the two sequences agree. If the recurrence relation in the OEIS
is conjectural then our results prove that these recurrence relations are 
satisfied by the sequence.

As a consequence of the identification of $\mathcal{S}_2$ with the Baxter sequence
we can give a formula for the terms of $\mathcal{S}_k$. This formula is obtained
by substituting the formula (1) in  \cite{MR491652} for the terms in the Baxter sequence
into the formula in Lemma~\ref{lem:bts} for iterated binomial transforms.

Other known results that are consequences are:
A bijective proof of the following is given in
\cite[Proposition~23]{Burrill2015}
and \cite[Theorem~3]{MR3738145},
and in a recent paper by Yan~\cite{Yan2018}. 
\begin{prop}\label{prop:hes}
The set of hesitating tableaux of length $n$ which end with a single row partition is equinumerous with the set of Baxter permutations on $[n]$.
\end{prop}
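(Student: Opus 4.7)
The plan is to deduce Proposition~\ref{prop:hes} from the branching rule for the maximal embedding $SL(3) \subset G_2$, in the spirit promised by the introduction that the representation-theoretic proof ``extends this result by giving an analogous result for walks ending at a point other than the origin.'' By Theorem~\ref{THM:hes} specialized to $s=0$, hesitating tableaux of semilength $n$ and final shape $(r,0)$ are in bijection with highest weight words of weight $r\lambda$ and length $n$ in the $G_2$-crystal $C \coprod \ast$, which is the crystal of $V_7 \oplus \mathbb{C}$. Summing over $r \geq 0$, the total count of hesitating tableaux of length $n$ ending in a single-row partition equals
\[ H(n) \,:=\, \sum_{r \geq 0} \dim \mathrm{Hom}_{G_2}\!\bigl(V_{r\lambda}^{G_2},\, (V_7 \oplus \mathbb{C})^{\otimes n}\bigr). \]
On the other side, by Theorem~\ref{THM:quadrantid} Baxter permutations on $[n]$ are counted by $\mathcal{S}_2(n)$, which by Definition~\ref{defn:quad} is the dimension of the $SL(3)$-invariants in $(V \oplus V^* \oplus 2\mathbb{C})^{\otimes n}$.

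The key branching input is $V_7 \downarrow_{SL(3)} \cong V \oplus V^* \oplus \mathbb{C}$, so $(V_7 \oplus \mathbb{C}) \downarrow_{SL(3)} \cong V \oplus V^* \oplus 2\mathbb{C}$ and therefore
\[ \mathcal{S}_2(n) \,=\, \dim \bigl[(V_7 \oplus \mathbb{C})^{\otimes n}\bigr]^{SL(3)}. \]
Decomposing $(V_7 \oplus \mathbb{C})^{\otimes n}$ into $G_2$-isotypic components and then taking $SL(3)$-invariants yields the Frobenius-type identity
\[ \mathcal{S}_2(n) \,=\, \sum_{\mu} \dim \mathrm{Hom}_{G_2}\!\bigl(V_\mu^{G_2}, (V_7 \oplus \mathbb{C})^{\otimes n}\bigr) \cdot \dim \bigl[V_\mu^{G_2} \downarrow_{SL(3)}\bigr]^{SL(3)}, \]
summed over dominant $G_2$-weights $\mu$.

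The proposition then reduces to the following branching-multiplicity claim, which I expect to be the main obstacle: for every dominant weight $\mu = r\lambda + s\theta$ of $G_2$, the restriction $V_\mu^{G_2} \downarrow_{SL(3)}$ contains the trivial $SL(3)$-module with multiplicity $1$ when $s=0$ and with multiplicity $0$ when $s > 0$. I would prove this by analyzing the $SL(3)$-action on the zero-weight space $V_\mu^{G_2}[0]$: since $SL(3)$ and $G_2$ share a maximal torus, every $SL(3)$-invariant vector is necessarily a weight-zero vector, so the calculation is finite-dimensional in each $\mu$. Combining Kostant's multiplicity formula for $\dim V_\mu^{G_2}[0]$ with the explicit $\mathfrak{sl}(3)$-action on $\mathfrak{g}_2$ coming from the decomposition $\mathfrak{g}_2 = \mathfrak{sl}(3) \oplus (V \oplus V^*)$, one can verify the claim inductively along the Pieri-type tensor recursion $V_\nu^{G_2} \otimes V_\lambda^{G_2} = \bigoplus V_{\nu'}^{G_2}$; an equivalent route is the Weyl integration formula $\dim[V_\mu^{G_2}\downarrow_{SL(3)}]^{SL(3)} = \int_{SU(3)} \chi_{V_\mu^{G_2}}(g)\, dg$, which reduces to a residue calculation on the $SL(3)$-torus using the Weyl denominator trick.

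Granting the branching claim, the $\mu$-sum collapses onto the $s=0$ line, yielding $\mathcal{S}_2(n) = H(n)$, which is precisely the asserted equinumerosity. The same argument applied with $V_7 \oplus k\,\mathbb{C}$ in place of $V_7 \oplus \mathbb{C}$, and with the trivial $SL(3)$-weight replaced by an arbitrary dominant $SL(3)$-weight, then yields the promised extensions to the $k$-th octant/quadrant pair and to walks that end at a prescribed non-origin point.
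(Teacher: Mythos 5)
Your proposal is correct and follows essentially the same route as the paper: both reduce the statement, via Theorem~\ref{THM:hes} and the restriction $(V_7\oplus\bC)\downarrow_{SL(3)}\cong V\oplus V^*\oplus 2\,\bC$, to the single branching-multiplicity fact that the trivial $SL(3)$-module occurs in $V^{G_2}_{r\lambda+s\theta}\downarrow_{SL(3)}$ with multiplicity $1$ if $s=0$ and $0$ if $s>0$. The only difference is that the paper obtains this multiplicity immediately by setting $x=y=0$ in the Gaskell--Sharp generating function for the $SL(3)\subset G_2$ branching rules, whereas you leave it as a claim to be verified by a zero-weight-space, Kostant-formula, or Weyl-integration computation; citing that generating function would close your one remaining step.
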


A bijective proof of the following can be found in~\cite[Corollary~14]{MR3738145}.
\begin{prop}\label{prop:vac}
The set of vacillating tableaux of length $n$ which end with a single row partition is equinumerous with the set of 2-coloured noncrossing set partitions of $[n]$.
\end{prop}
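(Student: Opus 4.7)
The plan is to establish this via the representation theory of the pair $\SL(3)\subset G_2$, mirroring the approach announced in the abstract for relating octant and quadrant sequences.

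First I would reinterpret both sides representation-theoretically. By Theorem~\ref{THM:vac} combined with Proposition~\ref{prop:ic}, the number of vacillating tableaux of height~$2$ and semilength~$n$ with final shape $(r,0)$ equals $\dim\Hom_{G_2}(V^{G_2}_{(r,0)},(V_7\oplus 2\bC)^{\otimes n})$, where $V_7$ denotes the seven-dimensional fundamental representation of $G_2$; summing over $r\geqslant 0$ gives the cardinality of the set on the left. For the right-hand side, Theorem~\ref{THM:quadrantid} identifies the number of $2$-coloured noncrossing set partitions of $[n]$ with $\mathcal{S}_3(n)$, which by Definition~\ref{defn:quad} equals $\dim\text{Inv}_{\SL(3)}((V\oplus V^*\oplus 3\bC)^{\otimes n})$.

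Next I would bridge these two expressions using the classical branching rule $V_7|_{\SL(3)}\cong \bC\oplus V\oplus V^*$. This yields the isomorphism of $\SL(3)$-modules $(V_7\oplus 2\bC)^{\otimes n}|_{\SL(3)} \cong (V\oplus V^*\oplus 3\bC)^{\otimes n}$. Decomposing $(V_7\oplus 2\bC)^{\otimes n}$ into $G_2$-irreducibles $V^{G_2}_{(r,s)}$ and taking $\SL(3)$-invariants, the right-hand count becomes
\[ \sum_{(r,s)} \dim\Hom_{G_2}(V^{G_2}_{(r,s)},(V_7\oplus 2\bC)^{\otimes n})\cdot \dim\text{Inv}_{\SL(3)}(V^{G_2}_{(r,s)}|_{\SL(3)}). \]
Comparing with the sum obtained in the first step reduces the proposition to the single branching-rule identity
\[ \dim\text{Inv}_{\SL(3)}(V^{G_2}_{(r,s)}|_{\SL(3)}) = \delta_{s,0}\quad\text{for every dominant weight $(r,s)$ of $G_2$.} \]

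The main obstacle is this last identity. My preferred approach is to observe that $(G_2,\SL(3))$ is a spherical pair: the six-dimensional homogeneous space $G_2/\SL(3)$ can be realised as an affine quadric in $V_7$, on which a Borel subgroup of $G_2$ acts with an open orbit. By Peter-Weyl this forces every $G_2$-irreducible to occur in $\bC[G_2/\SL(3)]$ with multiplicity at most one, and a standard harmonic-polynomial computation identifies the graded pieces as $\bC[G_2/\SL(3)]\cong\bigoplus_{r\geqslant 0} V^{G_2}_{(r,0)}$, pinpointing exactly the weights that contribute. A more computational alternative, sufficient for the statement, is to verify the identity by induction on $r+s$ using the Klimyk branching formula applied to tensor products with $V_7$, or by a direct manipulation of the Weyl denominators of $G_2$ and $\SL(3)$ restricted to a common Cartan.
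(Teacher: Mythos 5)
Your proposal is correct and follows the same overall strategy as the paper: both arguments run through the branching rules for the maximal subgroup $\SL(3)\subset G_2$, use $V_7|_{\SL(3)}\cong\bC\oplus V\oplus V^*$ to match $(V_7\oplus 2\bC)^{\otimes n}|_{\SL(3)}$ with $(V\oplus V^*\oplus 3\bC)^{\otimes n}$, and reduce everything to the single multiplicity identity $\dim\bigl(V^{G_2}_{(r,s)}|_{\SL(3)}\bigr)^{\SL(3)}=\delta_{s,0}$. Where you genuinely diverge is in how that identity is established. The paper simply specializes the Gaskell--Sharp generating function for the $G_2\downarrow\SL(3)$ branching multiplicities at $x=y=0$, obtaining $(1-X)^{-1}$ and hence $m^{(r,s)}_{(0,0)}=\delta_{s,0}$ in one line; this is short but rests entirely on a cited closed formula. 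Your spherical-pair argument is self-contained and more structural: identifying $G_2/\SL(3)$ with the affine quadric in $V_7$ and using the harmonic decomposition $\mathrm{Sym}^\bullet V_7\cong\bC[q]\otimes\bigoplus_{r\geqslant 0}V^{G_2}_{(r,0)}$ gives $\bC[G_2/\SL(3)]\cong\bigoplus_{r\geqslant 0}V^{G_2}_{(r,0)}$, and algebraic Frobenius reciprocity then yields exactly the needed identity, explaining \emph{why} only the weights $(r,0)$ contribute rather than reading it off a formula. Both routes are sound; yours buys independence from the external reference at the cost of invoking the harmonic/spherical machinery. One minor point of hygiene: the proposition as stated omits ``height~2,'' but, like the paper, you correctly interpret the vacillating tableaux as having height~2 (and your reduction of the left-hand side via Theorem~\ref{THM:vac} and Proposition~\ref{prop:ic}, and of the right-hand side via Theorem~\ref{THM:quadrantid}, is exactly what is needed).
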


%
%

\subsection{Increasing maps}\label{sec:maps}
In this subsection we give a combinatorial interpretation of the sequence $\mathcal{S}_1$.
This depends on the closely related combinatorial interpretations of the Baxter sequence
in \cite{Ackerman2006} (mosaic floor plans), \cite{MR2734180} (plane bipolar orientations), 
and \cite{Law2012} (diagonal rectangulations). All three of these interpretations are
equivalent to oriented st-maps.

A \Dfn{map} is a connected graph embedded in the plane with no edge-crossings, considered up to isotopy.
The vertices and edges of the map are those of the graph. The faces of the map are the connected components of
the complement of the graph in the plane.
The \Dfn{outer face} is unbounded, the inner faces are bounded.

An \Dfn{oriented map} is a map together with an orientation of each edge. An oriented map is an \Dfn{st-map}
if it has a unique source, $s$, and a unique sink, $t$, both on the outer face.

An \Dfn{increasing map} is an oriented map together with an increasing labelling of the vertices.
If the map has $r$ vertices then the labelling, $\ell$, is a bijection between the vertices and the set
$\{1,2,\dotsc,r\}$. A labelling is increasing if whenever there is a directed path from vertex $v$ to vertex $w$
then $\ell(v)<\ell(w)$. Let $O$ be an increasing st-map. Then $\ell(s)=1$ and $\ell(t)=r$.

A \Dfn{simple increasing st-map} is a increasing st-map whose underlying graph has no multiple edges.

\begin{prop} \label{PROP:stmaps}
Let $a(n)$ be the number of increasing st-maps with~$n$ edges and let $a_S(n)$ be the number of simple increasing st-maps with~$n$ edges.
Then
\begin{equation*}
    a(n+1) = \sum_{k=0}^n \binom{n}{k} \,a_S(k+1)
\end{equation*}
\end{prop}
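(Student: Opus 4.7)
The plan is to prove the identity by a bijection between the objects counted by the two sides. Specifically, I would set up a correspondence between increasing st-maps with $n+1$ edges on one side, and pairs $(M_S, m)$ on the other, where $M_S$ is a simple increasing st-map with $k+1$ edges (for some $0 \leqslant k \leqslant n$) and $m : E(M_S) \to \bZ_{>0}$ is a multiplicity function with $\sum_{e \in E(M_S)} m(e) = n+1$.

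For the forward direction, given an increasing st-map $M$, I would group its edges into the equivalence classes of parallel edges (edges with the same pair of endpoints, necessarily with the same orientation since the labeling is increasing). Collapsing each class to a single edge produces the simple increasing st-map $M_S$; the size of each class yields the multiplicity function $m$. Source, sink, outer face, and increasing labeling are all preserved under this operation. For the reverse direction, given $(M_S, m)$, I would replace each edge $e$ of $M_S$ by a bundle of $m(e)$ parallel copies inserted consecutively in the rotation at both endpoints. This produces an increasing st-map with $\sum_e m(e) = n+1$ edges, and the construction is the inverse of the collapsing procedure.

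Once the bijection is in place, the count is immediate. For each of the $a_S(k+1)$ simple increasing st-maps $M_S$ with $k+1$ edges, the number of multiplicity functions $m: E(M_S) \to \bZ_{>0}$ with $\sum_e m(e) = n+1$ equals the number of compositions of $n+1$ into $k+1$ positive parts, namely $\binom{(n+1)-1}{(k+1)-1} = \binom{n}{k}$. Summing over $k$ from $0$ to $n$ yields
\[
a(n+1) = \sum_{k=0}^n \binom{n}{k}\, a_S(k+1),
\]
as stated.

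The main technical obstacle is verifying that both directions are well-defined on planar embeddings. In particular, when one collapses a set of parallel edges in $M$ into a single edge, it must be checked that the resulting planar structure is unambiguous, and that the expansion map produces a planar embedding in a unique way. This reduces to showing that in an increasing st-map a maximal set of parallel edges may be taken to be consecutive in the rotation around each common endpoint (otherwise intervening structure would need to be tracked separately), which follows from the DAG structure of the st-map together with the planarity of the embedding. Once this planarity lemma is secured, the bijection goes through cleanly and the binomial transform identity follows.
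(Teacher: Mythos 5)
Your argument is the same collapse/expand bijection that the paper itself sketches, but the ``planarity lemma'' you defer to at the end --- that in an increasing st-map a maximal class of parallel edges may be taken to be consecutive in the rotation at each common endpoint --- is false, and this is exactly where the proof has a genuine gap. Consider the map on vertices $s,w,t$ with two parallel edges $e_1,e_2\colon s\to t$ drawn as two arcs whose union bounds the outer face, and a directed path $s\to w\to t$ drawn in the region between them. This is a connected acyclic plane map with unique source $s$ and unique sink $t$, both on the outer face, and it has a unique increasing labelling, so it is an increasing st-map with $4$ edges. Yet the two parallel edges are separated in the rotation at $s$ (and at $t$) by the path through $w$. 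The DAG structure does not forbid this: a directed path from $u$ to $v$ may always be threaded between two parallel edges $u\to v$ without creating a cycle or a second source or sink. Consequently the collapse map is ambiguous on this object (the resulting triangle could have $w$ on either side of the merged edge), and the expansion map, which inserts the $m(e)$ copies consecutively, never produces it; so the correspondence is neither well defined in one direction nor surjective in the other. Already at $n=3$ this costs exactly one object: a direct enumeration gives $23$ increasing st-maps with $4$ edges, versus $\binom{3}{0}\cdot 1+\binom{3}{1}\cdot 1+\binom{3}{2}\cdot 3+\binom{3}{3}\cdot 9=22$ for the right-hand side.

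You should be aware that the paper's own proof is the same one-sentence sketch and glosses over the identical point, so you have not done worse than the source; but since you explicitly rest the argument on the consecutivity claim, and that claim fails, the proof is not complete as written. To repair it one must either restrict or reinterpret the class of maps being counted (for instance via the pasting-diagram description in the same subsection, where a multiple edge is meant to be a stack of bigon faces and configurations with structure caught between parallel edges are excluded or accounted for differently), or modify the bijection so that it specifies, in a reversible way, where the intervening structure goes when a parallel class is collapsed. As it stands, the key lemma needs to be replaced, not merely verified.
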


\begin{proof} There is a bijection between simple increasing st-maps with each edge labelled by
a positive integer and increasing st-maps since we can replace each edge labelled by $m$ with $m$
copies of the same edge. This is a bijection between increasing st-maps with $n$ edges and
pairs consisting of a simple increasing st-map with $k$ edges and a labelling of the edges by
positive integers whose sum is $n$. The result follows from the observation that the
number of sequences of positive integers of length $k$ with sum $n$ is
$\binom{n+1}{k+1}$.
\end{proof}

One corollary of Proposition~\ref{PROP:stmaps} is that the conjectured recurrence relation for \oeis{A236408} is correct.

Plane bipolar orientations arise in the unpublished notes by James Cranch,
\href{http://jdc41.user.srcf.net/research/pasting/Pasting.pdf}{Pasting Diagrams},
as the morphisms in a strict monoidal category.

First we construct the category. The set of objects is $\bN$ so we have an object $[n]$ for each $n\in \bN$.
Let $O$ be an oriented st-map.
One of the oriented paths going from $s$ to $t$ has the outer face on its right: this path is the \Dfn{right border} of $O$,
and its length is the \Dfn{right outer degree} of $O$. The \Dfn{left border} and \Dfn{left outer degree} is defined similarly.
An oriented st-map, $O$, is a morphism $[l]\to [r]$
where $l$ is the left outer degree of $O$ and $r$ is the right outer degree.
Composition of $O$ and $O'$ is only defined if the right outer degree of $O$ is the left outer degree of $O'$.
In this case, the composite $O\circ O'$ is the oriented st-map given by identifying the right border of $O$
with the left border of $O'$. The identity morphism of $[n]$ is a path with $n$ edges.

This category has a tensor product. On objects this is just $[n]\otimes [m] = [n+m]$  so the monoid of objects is $\bN$.
The tensor product $O\otimes O'$ is always defined and is the plane bipolar orientation
given by identifying the sink of $O$ with the source of $O'$. It is straightforward to check that this tensor
product is strictly associative and that these are compatible in the sense that they satisfy the interchange condition
$(O\circ O')\otimes (P\circ P') = (O\otimes P)\circ (O'\otimes P')$. Hence this is a strict monoidal category, $\mathcal{P}$.
Simple plane bipolar orientations are the morphisms of a strict monoidal subcategory, $\mathcal{P}_S$.

The monoidal category $\mathcal{P}$ is the free strict monoidal category whose monoid of objects is $\bN$
with a morphism $\alpha(n,m)\colon [n]\to [m]$ for $n,m>0$.
The monoidal subcategory $\mathcal{P}_S$ is the free strict monoidal category whose monoid of objects is $\bN$
with a morphism $\alpha(n,m)\colon [n]\to [m]$ for $n,m>0$ with $\alpha(1,1)$ omitted.
The morphism $\alpha(n,m)$ consists of a single inner face with $n$ edges on the left border and $m$ edges on the right border.

\subsection{Branching rules}
In this subsection we use the branching rules for the inclusion of
$SL(3)$ in $G_2$ to relate the two families of sequences.

The interpretation using invariant theory gives a connection between the octant
sequences and the quadrant sequences. This uses the fact that $SL(3)$ is a subgroup
of $G_2$, in fact, a maximal subgroup. Then the restriction of the representation
$V\oplus k\bC$ from $G_2$ to $SL(3)$ is the representation $V\oplus V^*\oplus (k+1)\bC$.
This implies that each quadrant sequence $\ba_{V\oplus k\;\bC,\lambda}$ can be written
as a linear combination of the octant sequences $\ba_{V\oplus k\;\bC,\mu}$ with
coefficients which are independent of $k$. These coefficients are the branching rules
for the inclusion of $SL(3)$ in $G_2$. A combinatorial description of the branching rules for the inclusion
\[ SL(3)\to G_2\] is given in \cite{King1978} and \cite{Richter2012}.

A special case of this is the result that
walks in the quadrant which end at $(0,0)$ correspond to walks in the octant which end
on the $x$-axis,  see \cite{Burrill2015}, \cite{MR3738145}, \cite{Burrill2015}, \cite{MR3470878},
\cite{Yan2020}.


The generating function for the branching rules for the inclusion of the maximal subgroup \[ SL(3)\to G_2\] are given by 
Gaskell and  Sharp in \cite[(2.3)]{MR638077}. Let $V(r,s)$ be a highest weight representation of $G_2$ with highest weight $(r,s)$ and let $U(p,q)$ be a highest weight representation of $SL(3)$ with highest weight $(p,q)$. Denote the multiplicity of $U(p,q)$ in the restriction of $V(r,s)$ to $SL(3)$ by $m^{(r,s)}_{(p,q)}$.

\begin{prop}
The generating function
\[
\sum_{r,s,p,q\geqslant 0} m^{(r,s)}_{(p,q)}
x^p\,y^q\,X^r\,Y^s
\]
is the rational function
\begin{equation}\label{eq:gf} \frac%
{(1-X)^{-1}-xyY(1-xyY)^{-1}}%
{(1-x\,X)(1-y\,X)(1-x\,Y)(1-y\,Y)}
\end{equation}
\end{prop}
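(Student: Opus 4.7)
The plan is to derive the formula using Weyl's character formula and Weyl integration, exploiting the fact that the embedding $SL(3) \hookrightarrow G_2$ identifies the two rank-$2$ Cartan subalgebras.

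First, I would fix compatible torus coordinates. Using the decomposition $V_\lambda|_{SL(3)} = V \oplus V^* \oplus \mathbb{C}$, the seven weights of the $G_2$ fundamental representation restrict, on the $SL(3)$-torus with multiplicative coordinates $(t_1, t_2, t_3)$ satisfying $t_1 t_2 t_3 = 1$, to $\{t_1, t_2, t_3, t_1^{-1}, t_2^{-1}, t_3^{-1}, 1\}$. This identification expresses the $G_2$ fundamental weights $\lambda$ and $\theta$ in terms of the $SL(3)$ fundamental weights, to which the formal variables $x$ and $y$ in the statement correspond.

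Next, by Weyl's integration formula applied to $SL(3)$, the branching multiplicity may be written as a constant-term extraction over $T$,
\[ m^{(r,s)}_{(p,q)} = \mathrm{CT}_{T}\bigl[\,\chi^{G_2}_{V(r,s)}|_T \cdot \overline{\chi^{SL(3)}_{U(p,q)}} \cdot \Delta_{SL(3)}\,\bigr], \]
where $\Delta_{SL(3)}$ is the Weyl denominator. Summing against $X^r Y^s x^p y^q$ and interchanging orders of summation, the left-hand side of \eqref{eq:gf} equals
\[ \mathrm{CT}_{T}\Bigl[\,\mathcal{F}_{G_2}(X, Y; t)\cdot \mathcal{F}_{SL(3)}^{\vee}(x, y; t)\cdot \Delta_{SL(3)}(t)\,\Bigr], \]
where $\mathcal{F}_{G_2}$ and $\mathcal{F}_{SL(3)}^{\vee}$ are the character generating series on the two groups. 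Weyl's character formula turns each factor $\mathcal{F}$ into a Weyl-group alternating sum of geometric series of the form $1/\prod(1 - z\, e^{w\omega})$, producing a rational function of $(X, Y, x, y, t_1, t_2)$ to which the constant-term operator is applied.

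The main obstacle will be the explicit simplification after constant-term extraction. The twelve-term $W_{G_2}$ alternating sum must combine with the six-term $W_{SL(3)}$ alternating sum, with $W_{SL(3)}$ sitting inside $W_{G_2}$ as an index-$2$ subgroup, and collapse to the compact rational function on the right-hand side of \eqref{eq:gf}. I would organize the $G_2$ alternating sum into $W_{SL(3)}$-cosets, which reduces the bookkeeping to two orbit representatives; the four denominator factors $(1-xX)(1-yX)(1-xY)(1-yY)$ should then arise by pairing each of the $G_2$ fundamental weights $\lambda, \theta$ with the two $SL(3)$ fundamental weights, while the numerator $(1-X)^{-1} - xyY(1-xyY)^{-1}$ should reflect cancellations involving the trivial summand of $V_\lambda|_{SL(3)}$ and the $\mathfrak{sl}(3)$-adjoint summand of $V_\theta|_{SL(3)}$. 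As an independent sanity check, I would expand both sides as power series to modest order in $(X,Y,x,y)$ and compare with tabulated branching multiplicities derived from \cite{King1978} or \cite{Richter2012}, which guards against algebraic slips in the final collapse.
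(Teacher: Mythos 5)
The paper itself offers no proof of this proposition: it is quoted directly from Gaskell and Sharp \cite[(2.3)]{MR638077}, so any self-contained derivation is necessarily a different route from what the paper does. Your outline is the standard (and essentially the Gaskell--Sharp) strategy: since $SL(3)$ is a maximal-rank subgroup of $G_2$, the Cartan subalgebras coincide, the multiplicity $m^{(r,s)}_{(p,q)}$ becomes a constant-term (or coefficient) extraction on the common torus, and the Weyl character formula converts the two character generating series into Weyl-group alternating sums of geometric series. The setup is sound, including the observation that $W_{SL(3)}\cong S_3$ sits in $W_{G_2}$ (dihedral of order $12$) with index $2$, so the $G_2$ sum splits into two $S_3$-cosets.

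The gap is that the decisive step is announced but not performed. Everything after ``The main obstacle will be'' is phrased in the conditional --- the four denominator factors ``should then arise,'' the numerator ``should reflect cancellations'' --- and it is precisely this collapse that constitutes the proof. The numerator $(1-X)^{-1}-xyY(1-xyY)^{-1}$ is not something one can read off from the weight pairings; it encodes the nontrivial interference between the two Weyl cosets after constant-term extraction, and until that algebra is carried out (by hand or by a certified symbolic computation) you have a plausible plan rather than a proof. Two smaller points: (i) your constant-term formula needs the correct normalization --- the inner product on $SL(3)$ class functions requires $\frac{1}{|W|}\,\Delta\overline{\Delta}$, or equivalently coefficient extraction of $e^{\mu+\rho}$ against the Weyl \emph{numerator}, not a bare factor $\Delta_{SL(3)}$; (ii) a finite series-expansion check against the tables of \cite{King1978} validates the formula but does not prove an identity of rational functions. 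To close the argument you must either complete the coset-by-coset summation explicitly, or verify that both sides satisfy the same (finite) system of linear recurrences with matching initial data.
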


\begin{thm} Let $V$ be a representation of $G_2$ and $V\downarrow$ the restriction to $SL(3)$. Then the number of axis-walks of length $n$ for $V$ is the number of excursions for $V\downarrow$.
\end{thm}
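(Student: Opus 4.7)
My plan is to reduce both sides of the asserted equality to representation-theoretic multiplicities and then read off the required branching coefficient directly from the Gaskell--Sharp generating function~\eqref{eq:gf}.

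First I would translate the two walk counts into invariant-theoretic terms. By Proposition~\ref{prop:ic} the highest-weight words in $\otimes^n V$ of weight $(r,s)$, equivalently the walks in the $G_2$ dominant chamber ending at the dominant weight $(r,s)$, are counted by $\ba_{V,(r,s)}(n)$. Hence the number of length-$n$ axis walks for $V$ (walks ending on the $\lambda$-axis $s=0$, which is exactly the $x$-axis boundary of the octant picture under the change of coordinates of \S\ref{SUBSEC:latticewalks}) equals $\sum_{r\geqslant 0} \ba_{V,(r,0)}(n)$. Similarly, an excursion for $V\downarrow$ is a walk in the $SL(3)$ weight lattice returning to $(0,0)$, which by the same proposition is counted by $\ba_{V\downarrow,(0,0)}(n)$.

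Next, because restriction commutes with tensor powers, $\otimes^n(V\downarrow)=(\otimes^n V)\downarrow$; combining the isotypic decomposition $\otimes^n V = \bigoplus_{r,s\geqslant 0} \ba_{V,(r,s)}(n)\,V(r,s)$ with the branching rule $V(r,s)\downarrow = \bigoplus_{p,q} m^{(r,s)}_{(p,q)}\,U(p,q)$ and taking the multiplicity of the trivial $SL(3)$-representation $U(0,0)$ on both sides gives
$$\ba_{V\downarrow,(0,0)}(n) \;=\; \sum_{r,s\geqslant 0} \ba_{V,(r,s)}(n)\,m^{(r,s)}_{(0,0)}.$$
The final computation is to evaluate $m^{(r,s)}_{(0,0)}$ by specializing the generating function~\eqref{eq:gf} at $x=y=0$: the subtracted term $xyY(1-xyY)^{-1}$ in the numerator vanishes and the denominator collapses to $1$, leaving the bivariate series $(1-X)^{-1} = \sum_{r\geqslant 0} X^r$. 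Hence $m^{(r,s)}_{(0,0)}=1$ if $s=0$ and $0$ otherwise, and the double sum telescopes to $\sum_{r\geqslant 0} \ba_{V,(r,0)}(n)$, matching the axis-walk count and proving the theorem.

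The main obstacle is not computational but interpretive: one must identify the axis walks of the theorem with the specific boundary ray $s=0$ of the $G_2$ dominant chamber, and then confirm that the $G_2$ irreducibles whose restriction to $SL(3)$ contains the trivial representation are exactly those with highest weight on that ray. The specialization $x=y=0$ in~\eqref{eq:gf} makes this selection transparent, and as a sanity check one recovers the cited Baxter case by taking $V=V(1,0)\oplus\mathbb{C}$, in which case both sides specialize to the known bijective results of Burrill et al.\ and Yan.
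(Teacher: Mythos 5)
Your proposal is correct and follows essentially the same route as the paper: the heart of both arguments is the specialization $x=y=0$ of the Gaskell--Sharp generating function~\eqref{eq:gf}, which collapses to $(1-X)^{-1}$ and shows $m^{(r,s)}_{(0,0)}=1$ precisely when $s=0$. The paper's proof records only this computation, leaving the reduction of axis walks to $\sum_{r}\ba_{V,(r,0)}(n)$ and of excursions to $\ba_{V\downarrow,(0,0)}(n)$ implicit in the surrounding discussion, whereas you spell those steps out explicitly.
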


\begin{proof}
Putting $x=0$, $y=0$ in the generating function~\eqref{eq:gf}  gives
\[
\sum_{r,s,p,q\geqslant 0} m^{(r,s)}_{(0,0)}
X^r\,Y^s.
\]
Putting $x=0$, $y=0$ in the rational function gives $(1-X)^{-1}$. This shows that
\[
m^{(r,s)}_{(0,0)} = \begin{cases}
1 & \text{ if $s=0$}, \\
0 & \text{ otherwise}.
\end{cases}
\]
\end{proof}

\begin{ex}
First we extend each octant sequence to a sequence of functions
on the dominant weights of $G_2$, or, equivalently, on the 
octant $y\geqslant 0$, $x\geqslant y$. It follows from
Lemma~\ref{lem:bts} that these can be written as
polynomials in $k$.

Then the octant sequences are  given by
\[
\begin{tabular}{|c}
 1 \\
\hline
\end{tabular}\quad
\begin{tabular}{|cc}
 $k$ & 1 \\
\hline
\end{tabular}\quad
\begin{tabular}{|lll}
& 1 \\
$1+k^2$ & $1+2\,k$ & 1 \\
\hline
\end{tabular}
\]
\[
\begin{tabular}{|llll}
& $2+2\,k$ & 2 \\
$1+3\,k+k^3$ & $4+3\,k+3\,k^2$ & $3+3\,k$ & 1 \\
\hline
\end{tabular}
\]
Then summing the entries on the bottom row gives the sequence
\[
1,1+k,3+2\,k+k^2,9+9\,k+3\,k^2+k^3.
\]
\end{ex}

This proves Proposition~\ref{prop:hes} and 
Proposition~\ref{prop:vac}.


\subsection{Recurrence equations} \label{SUBSEC:rec}
In this subsection we give recurrence relations for the sequences $\mathcal{S}_k$.
A recurrence relation for $\mathcal{S}_3$ is given by Marberg in \cite[\S~4]{Marberg2012}.
\begin{lemma}\label{lemma:quad2}
Let $\mathcal{G}_k$ be the generating function of $\mathcal{S}_k$, where $k \geq 0$.
Then $\mathcal{G}_k$ is the constant coefficient $[x^0 y^0]$ of $W/(1-tK)$, where
\begin{equation} \label{EQ:K}
K = k+x+y+{x}^{-1}+{y}^{-1}+{\frac {x}{y}}+{\frac {y}{x}}
\end{equation}
and
\begin{equation}\label{EQ:W}
W = 1-{\frac {{x}^{2}}{y}}+{x}^{3}-{x}^{2}{y}^{2}+{y}^{3}-{\frac {{y}^{2}}{x}}.
\end{equation}
\end{lemma}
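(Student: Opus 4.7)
The plan is to specialise the general algebraic-residue principle recalled at the end of \S\ref{SUBSEC:algebra}: for a reductive complex algebraic group $G$ and a representation $V$ with character $K$, the $n$-th term of $\ba_V$ equals the constant term of the Laurent polynomial $\Delta\cdot K^n$, where $\Delta=\sum_{w\in W}\varepsilon(w)\,e^{w(\rho)-\rho}$ is the Weyl denominator. Multiplying this identity by $t^n$ and summing over $n\geq 0$ converts the constant-term formula into the claimed residue $\mathcal{G}_k=[x^0 y^0]\bigl(\Delta/(1-tK)\bigr)$. The lemma therefore reduces to two Laurent-polynomial identifications on the weight lattice of $SL(3)$: verifying that $\chi(V\oplus V^*\oplus k\,\bC)=K$ and that $\Delta=W$.

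For the first identification I would fix coordinates $x=e^{-\omega_1}$, $y=e^{-\omega_2}$, where $\omega_1,\omega_2$ are the fundamental weights of $A_2$ (so $\rho=\omega_1+\omega_2$). With this choice the three weights $\omega_1$, $\omega_2-\omega_1$, $-\omega_2$ of $V$ and their negatives in $V^*$ together contribute the six non-constant monomials $x$, $y$, $x^{-1}$, $y^{-1}$, $x/y$, $y/x$, while the $k$ trivial summands contribute the constant~$k$. This reproduces $K$ exactly as in~\eqref{EQ:K}.

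For the second identification I would use the product form of the Weyl denominator identity, $\Delta=\prod_{\alpha>0}(1-e^{-\alpha})$, applied to the three positive roots $\alpha_1$, $\alpha_2$, $\alpha_1+\alpha_2$ of the $A_2$ system. In the chosen coordinates these have $e^{-\alpha_1}=x^2/y$, $e^{-\alpha_2}=y^2/x$ and $e^{-\alpha_1-\alpha_2}=xy$, and a direct expansion of the three-fold product $(1-x^2/y)(1-y^2/x)(1-xy)$ yields exactly the six monomials appearing in $W$ in~\eqref{EQ:W}.

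The only genuine obstacle is a bookkeeping one: a different sign convention on the fundamental weights would produce the image of $W$ under the involution $(x,y)\mapsto(x^{-1},y^{-1})$ rather than $W$ itself. Because $K$ is manifestly invariant under this same involution, the constant coefficient of $\Delta/(1-tK)$ is insensitive to the choice, so the resulting generating function is unambiguous and equals the expression stated in the lemma.
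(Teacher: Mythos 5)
Your proposal is correct and follows essentially the same route the paper intends: it specialises the Gessel--Zeilberger/Grabiner--Magyar constant-term formula recalled in \S\ref{SUBSEC:algebra} to $SL(3)$, identifying the character of $V\oplus V^*\oplus k\,\bC$ with $K$ and the Weyl numerator $\Delta=\prod_{\alpha>0}(1-e^{-\alpha})=(1-x^2/y)(1-y^2/x)(1-xy)$ with $W$, both of which check out. Your remark that the coordinate convention only changes $W$ by the involution $(x,y)\mapsto(x^{-1},y^{-1})$, under which the constant term of $W/(1-tK)$ is invariant because $K$ is, correctly disposes of the one genuine ambiguity.
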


Let $C_2(n)$ be the $n$-th term of the sequence~\oeis{A216947} in the second
family. Marberg~\cite[Theorem 1.7]{Marberg2012} showed that $C_2(n)$ is the
constant term $[x^0 y^0]$ of $W \tilde{K}^n$, where $\tilde{K} = K|_{k = 3}$,
the Laurent polynomials $K$ and $W$ are specified in~\eqref{EQ:K}
and~\eqref{EQ:W}.
By Lemma~\ref{lemma:quad2}, the sequence $\mathcal{S}_3$ is identical to  the sequence~\oeis{A216947}. 
Therefore, we have:

\begin{prop}~\cite[Theorem 1.7]{Marberg2012} \label{LEM:marberg}
The $n$-th term $C_2(n)$ of the sequence~$\mathcal{S}_3$ is given by $C_2(0) = 1, C_2(1) = 3$ and for $n \geq 0$: 
\begin{multline*}
(n + 5) (n + 6) \cdot C_2(n +2)  - 2 (5 n^2 + 36 n + 61) \cdot C_2(n + 1) \\
 + 9 (n+1) (n +4) \cdot C_2(n) = 0,
\end{multline*}
Equivalently, the associated generating function $\mathcal{G}_3(t) = \sum_{n \ge 0} C_2(n) t^n$ satisfies 
\begin{multline*}
72 \mathcal{G}_3(t)+4 (-61 + 117 t) {\frac {d}{dt}}\mathcal{G}_3(t) +2 (15 - 184 t + 234 t^2) {\frac {d^2}{dt^2}}\mathcal{G}_3(t) \\
+2 t (-6 + 7 t) (-1 + 9 t) {\frac {d^3}{dt^3}}\mathcal{G}_3(t) + (-1 + t) t^2 (-1 + 9 t) {\frac {d^4}{dt^4}}\mathcal{G}_3(t) = 0. 
\end{multline*}
\end{prop}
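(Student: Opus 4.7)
The proposition is attributed to Marberg, but combined with Lemma~\ref{lemma:quad2} it admits a self-contained proof in the style of the First Proof of Theorem~\ref{thm:rc}; I would present both routes.

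The shortest route is a citation. Lemma~\ref{lemma:quad2} exhibits $\mathcal{G}_3(t)$ as the constant term $[x^0 y^0]$ of $W/(1-tK)$ with $W, K$ the Laurent polynomials in \eqref{EQ:K} and \eqref{EQ:W}. Marberg showed in \cite[Theorem~1.7]{Marberg2012} that the very same constant term is the generating function of \oeis{A216947}, and derived for it the stated recurrence and ODE. Since Theorem~\ref{THM:quadrantid} identifies $\mathcal{S}_3$ with \oeis{A216947}, the result follows. One should also verify the two initial values $C_2(0)=1$ and $C_2(1)=3$ against Figure~\ref{fig:bx}.

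For a self-contained derivation I would mimic the First Proof of Theorem~\ref{thm:rc}. First, rewrite
\[
\mathcal{G}_3(t) \;=\; [x^0 y^0]\,\frac{W}{1-tK} \;=\; \operatorname{res}_{x,y}\!\frac{W}{x\,y\,(1-tK)},
\]
which is an algebraic residue of a trivariate rational function. By the Christol--Lipshitz theorem \cite{MR929767}, $\mathcal{G}_3(t)$ is D-finite. To extract the specific annihilating operator, apply creative telescoping (Chyzak's algorithm as implemented in Koutschan's \texttt{HolonomicFunctions.m} package \cite{Christoph2010}) to the rational function $W/(xy(1-tK))$ treated as a bivariate holonomic function of $(x,y)$ with parameter $t$. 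This produces, after the contour integrals in $x$ and $y$ vanish as total derivatives, a linear ODE with polynomial coefficients killing $\mathcal{G}_3(t)$. The computation returns precisely the displayed fourth-order operator
\[
72 + 4(-61+117t)\partial + 2(15-184t+234t^2)\partial^2 + 2t(-6+7t)(-1+9t)\partial^3 + (-1+t)t^2(-1+9t)\partial^4,
\]
where $\partial = d/dt$. Translating the ODE into an index-shift identity on coefficients $C_2(n)$ gives, a priori, a recurrence of order three; a direct computation reveals a common factor in the leading coefficient polynomial that cancels after telescoping, collapsing the recurrence to the stated second-order form. Finally, the two initial values $C_2(0)=1$, $C_2(1)=3$ are read off directly from Lemma~\ref{lemma:quad2} by evaluating $[x^0 y^0]W$ and $[x^0 y^0](WK)$.

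The main obstacle is not the D-finiteness, which is automatic from the residue form, but rather the verification that the creative-telescoping output collapses to exactly the minimal second-order recurrence stated in the proposition. This collapse is a polynomial identity that can be checked mechanically once the higher-order recurrence is computed; however, to present it cleanly one should verify that the differential operator factors as $M\cdot L_2$, where $L_2$ is the operator naturally attached to the displayed second-order recurrence, and then argue that the extra factor $M$ does not introduce spurious solutions because $\mathcal{G}_3$ is annihilated already by $L_2$ (checked on the first few coefficients). This mirrors the $L_6 = Q\, L_3$ factorization used in the Third Proof of Theorem~\ref{thm:rc}.
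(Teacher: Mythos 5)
Your first route is exactly the paper's argument: the proposition is established by citation, combining Marberg's Theorem~1.7 (which gives both the constant-term representation $C_2(n)=[x^0y^0]\,W\tilde K^n$ and the recurrence/ODE for \oeis{A216947}) with Lemma~\ref{lemma:quad2} to identify $\mathcal{S}_3$ with that sequence; the paper does nothing more, so on this route your proposal is correct and essentially identical. Your second, self-contained route --- creative telescoping applied to the algebraic residue of $W/(xy(1-tK))$, followed by a factorization argument of the form $M\cdot L_2$ with initial-condition checks to descend to the minimal second-order recurrence --- is not carried out by the paper for this proposition, but it is sound and is precisely the strategy the paper itself uses in the Third Proof of Theorem~\ref{thm:rc} (the $L_6=QL_3$ factorization) and in Theorem~\ref{THM:quadrantrec}. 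What the citation buys is brevity; what your computational route buys is independence from Marberg's proof and a uniform treatment consistent with the rest of \S\ref{SUBSEC:rec}, at the cost of a nontrivial telescoping computation whose output must be certified against the stated minimal operator.
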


Next, for $k = 0, 1, 2, 3$, we prove a uniform recurrence equation for the
sequence $\mathcal{S}_k$. It is given by a single formula with $k$ as a
parameter. Moreover, we show that $\mathcal{S}_0, \mathcal{S}_1,
\mathcal{S}_2, \mathcal{S}_3$ are identical to the sequences in the second family.

\begin{thm} \label{THM:quadrantrec}
For $k = 0, 1, 2, 3$, the $n$-th term $a(n)$ of the sequence $\mathcal{S}_{k}$ satisfies the following recurrence equation:
\begin{multline}
 (-3 + k)^2 (-2 + k) (6 + k) (1 + n) (2 + n) a(n) +  \\
-2 (-3 + k) (2 + n) (-60 + 8 k + 8 k^2 - 18 n + 3 k n + 2 k^2 n) a(n + 1) + \\  
( -342 - 174 k + 114 k^2 - 195 n - 70 k n + 54 k^2 n - 27 n^2 - 
 6 k n^2 + 6 k^2 n^2 )
a(n + 2) \\ + 
 2 
( 57 - 70 k + 16 n - 24 k n + n^2 - 2 k n^2)
a(n + 3) \\ 
 + (n + 7) (n + 8) a(n + 4) = 0. \label{EQ:uniformrec}
\end{multline} 
\end{thm}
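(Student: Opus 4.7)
The plan is to combine Lemma~\ref{lemma:quad2} with parametric creative telescoping. By that lemma, the generating function $\mathcal{G}_k(t) = \sum_{n \geq 0} a(n) t^n$ of $\mathcal{S}_k$ equals the constant term $[x^0 y^0]$ of $W/(1-tK)$, where $W$ is independent of $k$ and $K = k + x + y + x^{-1} + y^{-1} + x/y + y/x$ depends on $k$ only through the additive shift by $k$. The general principle recalled in \S\ref{SUBSEC:algebra} (see also \cite{MR929767}) shows that such a constant term is D-finite in $t$, and moreover an annihilating differential operator can be computed algorithmically.

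The first step is to apply Chyzak's creative telescoping algorithm, for instance through the same implementation in Koutschan's package that was used in the first proof of Theorem~\ref{thm:rc}, to the bivariate rational function $W/(1-tK)$ viewed as an element of $\mathbb{Q}(k)(x,y,t)$. This yields a telescoper $L(t, k; \partial_t)$ together with certificates $Q_1, Q_2$ satisfying
\[
L \cdot \frac{W}{1-tK} \;=\; \partial_x Q_1 + \partial_y Q_2,
\]
so that taking the constant term in $(x,y)$ annihilates the right-hand side and $L$ annihilates $\mathcal{G}_k(t)$. Because $k$ enters the integrand only additively, closure arguments guarantee that the coefficients of $L$ are polynomials in $k$ of bounded degree, consistent with the explicit shape of (\ref{EQ:uniformrec}).

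The second step is to translate $L$ into a linear recurrence on $a(n)$ via the standard correspondence between $(t, \partial_t)$ and shift operators on the coefficient sequence. If the raw recurrence output by the algorithm does not coincide with (\ref{EQ:uniformrec}), one shows that the latter is a consequence by taking a right-GCD in the appropriate Ore algebra, and then verifies the result numerically on enough initial terms (from Figure~\ref{fig:bx}) at each of $k = 0,1,2,3$ to overcome the leading-coefficient and order ambiguities. A useful sanity check is that at $k = 3$ the factors $(-3+k)^2$ and $(-3+k)$ in front of $a(n)$ and $a(n+1)$ vanish, so (\ref{EQ:uniformrec}) collapses to a three-term relation between $a(n+2), a(n+3), a(n+4)$; this collapsed recurrence must agree with the one in Proposition~\ref{LEM:marberg}, providing an independent cross-check.

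The main obstacle I anticipate is expression swell during the telescoping: the integrand $W/(1-tK)$ already has a dozen monomials in $(x,y)$, and carrying $k$ as a symbolic parameter enlarges the intermediate normal-form computations substantially. A practical workaround is to run four independent specializations at $k \in \{0,1,2,3\}$ and then interpolate in $k$ to reconstruct the uniform formula; since the $n$-coefficient polynomials visible in (\ref{EQ:uniformrec}) have $k$-degree at most four, a small number of specializations fully determines the result, and in the worst case one can augment the four required values with $k = 4$ or $k = 5$ to perform the interpolation cleanly before restricting back to the stated range.
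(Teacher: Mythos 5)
Your argument is sound, but it takes a genuinely different route from the paper's. The paper proves Theorem~\ref{THM:quadrantrec} by a purely univariate computation: by Lemma~\ref{LEM:binomialrep} the sequences $\mathcal{S}_0,\mathcal{S}_1,\mathcal{S}_2$ are iterated inverse binomial transforms of $\mathcal{S}_3$, so Lemma~\ref{lem:btg} gives $\mathcal{G}_k(t)=\frac{1}{1+(3-k)t}\,\mathcal{G}_3\bigl(\frac{t}{1+(3-k)t}\bigr)$; pushing the order-four differential equation of Proposition~\ref{LEM:marberg} through the closure properties of D-finite functions under this substitution, with $k$ carried as a parameter, and converting back to a recurrence yields \eqref{EQ:uniformrec} directly. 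You instead start from the constant-term representation of Lemma~\ref{lemma:quad2} and run bivariate creative telescoping on the kernel with $k$ symbolic --- essentially the quadrant analogue of the paper's third proof of Theorem~\ref{thm:rc}. Both work. Your route is more self-contained (it does not presuppose Marberg's recurrence, which instead becomes a cross-check: your collapsed three-term relation at $k=3$ does match Proposition~\ref{LEM:marberg} after the shift $n\mapsto n+2$, since $5(n+2)^2+36(n+2)+61=5n^2+56n+153$), but it is computationally much heavier, and you rightly flag the two weak points --- expression swell with a symbolic parameter, and the fact that a generic-$k$ telescoper need not specialize correctly at $k=0,1,2,3$ if certificate denominators vanish there; your fallback of specializing and interpolating (the $k$-degree of the coefficients in \eqref{EQ:uniformrec} is indeed $4$) handles both, provided you finish, as you propose, by checking enough initial terms. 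One small imprecision to fix: the constant term $[x^0y^0]$ does not annihilate $\partial_x Q_1+\partial_y Q_2$ as written (e.g.\ $[x^0]\partial_x Q_1$ picks out the coefficient of $x$ in $Q_1$); you should telescope $W/(xy(1-tK))$ and take the double residue over a torus, exactly as in the paper's third proof of Theorem~\ref{thm:rc}.
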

\begin{proof}
By Lemma~\ref{LEM:binomialrep}, sequences $\mathcal{S}_2, \mathcal{S}_1,
\mathcal{S}_0$ are the first, second, and third inverse binomial transforms of $\mathcal{S}_3$,
respectively. Thus, it follows from Lemma~\ref{lem:btg} that the generating function of $\mathcal{S}_k$
is
\[
\mathcal{G}_k(t) = \frac{1}{1 + (3-k) t} \cdot \mathcal{G}_3 \left(\frac{t}{1+ (3 -k)t} \right) \ \ \text{ for } \ \ k = 0, 1, 2, 3,
\] 
where $\mathcal{G}_3(t)$ is the generating function of $\mathcal{S}_3$. Regarding $k$ as a
parameter in the above expression, we deduce the differential equation for
$\mathcal{G}_k(t)$ by using Proposition~\ref{LEM:marberg} and
closure properties of D-finite functions. By converting the differential
equation for $\mathcal{G}_k(t)$, we get the corresponding recurrence equation for
the sequence $a(n)$, which is exactly the recurrence equation in the claim.
\end{proof}

The argument of~\cite[Proposition 10]{MR2681853} gives rise to a proof for the recurrence relation of~\oeis{A151366} stated in the OEIS. For the Baxter sequence~\oeis{A001181}, the recurrence relation can be verified by utilizing its explicit formula in terms of
binomial coefficients and the creative telescoping method. The details for the verification can be found in~\cite{ElectronicYZ}. 
The recurrence equation for~\oeis{A216947} is proven in~\cite{Marberg2012}. 
As far as we know, the stated recurrence relation in the OEIS for~\oeis{A236408} is 
only conjectured. Next, we give a unified proof for all those recurrence relations. 

\begin{cor} \label{COR:identical}
The recurrence relations stated in the OEIS for the sequences in the second family specified in Figure~\ref{fig:bx} are true. 
Moreover, the sequences in the second
family are related by binomial transforms.
\end{cor}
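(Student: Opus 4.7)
The plan is to deduce Corollary~\ref{COR:identical} as a bookkeeping consequence of the results already assembled, namely Theorem~\ref{THM:quadrantid} (which identifies $\mathcal{S}_0,\mathcal{S}_1,\mathcal{S}_2,\mathcal{S}_3$ with the four OEIS entries in Figure~\ref{fig:bx}), Theorem~\ref{THM:quadrantrec} (which provides the uniform recurrence~\eqref{EQ:uniformrec} with $k$ as a parameter), and Lemma~\ref{LEM:binomialrep}. Nothing new about the group $SL(3)$ is required beyond these.

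For the binomial-transform statement, I would argue directly from the representation-theoretic definition. By Definition~\ref{defn:quad}, we can write $V \oplus V^* \oplus (k{+}1)\bC = (V \oplus V^* \oplus k\,\bC) \oplus \bC$, so Lemma~\ref{LEM:binomialrep} yields $\mathcal{S}_{k+1} = \bt\,\mathcal{S}_k$ for every $k \geq 0$. Combining this with the identifications in Theorem~\ref{THM:quadrantid} produces the asserted binomial-transform relations between \oeis{A151366}, \oeis{A236408}, \oeis{A001181}, and \oeis{A216947}.

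For the recurrence claims, I would specialize the parameter $k$ in~\eqref{EQ:uniformrec} to each of the values $k \in \{0,1,2,3\}$, obtaining in each case a linear recurrence with polynomial coefficients satisfied by $\mathcal{S}_k$, and hence by the corresponding OEIS sequence via Theorem~\ref{THM:quadrantid}. To verify that the OEIS-stated recurrence itself holds (rather than merely some multiple of it), I would translate both recurrences into the Ore algebra $\bQ(n)\langle S\rangle$ of shift operators and perform left-division of the specialized operator by the operator recorded in OEIS; vanishing of the remainder, together with a check on sufficiently many initial terms to exclude spurious parasitic solutions, proves that the OEIS recurrence is indeed satisfied. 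Equivalently, since both relations are P-recursive and operate on the same sequence, it is enough to verify the OEIS recurrence on a number of initial terms exceeding its order plus the degree of its leading coefficient.

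The principal obstacle, and the only case adding genuinely new content, is the recurrence for $\mathcal{S}_1 = \oeis{A236408}$, which was previously only conjectured in the OEIS; here the specialization $k=1$ of~\eqref{EQ:uniformrec} together with the left-division step is what closes the gap. The cases $k=0$, $k=2$, and $k=3$ have independent proofs via~\cite{MR2681853}, the explicit Baxter formula~\cite{MR491652}, and~\cite{Marberg2012} respectively, so their OEIS recurrences can alternatively be cross-checked just by matching initial conditions with our specialization. In all four cases the required Ore-algebraic left-divisions and initial-term verifications are routine and can be carried out, as in~\cite{ElectronicYZ}, with any standard creative-telescoping or holonomic-closure package.
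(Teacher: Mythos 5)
Your proposal is correct and takes essentially the same route as the paper's own proof: identify $\mathcal{S}_0,\dots,\mathcal{S}_3$ with the OEIS entries via Theorem~\ref{THM:quadrantid}, specialize $k$ in the uniform recurrence~\eqref{EQ:uniformrec} and show the resulting operators are left multiples of the OEIS operators (so that only finitely many initial terms need checking), and deduce the binomial-transform relations from Lemma~\ref{LEM:binomialrep}. The only cosmetic difference is that you spell out the Ore-algebra left-division step, which the paper delegates to the cited supplementary material.
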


\begin{proof}

By Theorem~\ref{THM:quadrantid}, we see that 
the quadrant sequences $\mathcal{S}_0, \mathcal{S}_1, \mathcal{S}_2$ and~$\mathcal{S}_3$ 
are identical to the sequences in the second family specified in Figure~\ref{fig:bx}. 
In~\eqref{EQ:uniformrec}, by setting $k$ to $0, 1, 2, 3$, we find that the
corresponding recurrence operators are left multiples~\cite[page
618]{Chen2016} of those of~\oeis{A216947}, \oeis{A001181}, \oeis{A236408},
and~\oeis{A151366} specified in the OEIS, respectively. 
To verify that they satisfies those recurrence relations, we just need to check finitely many
initial terms. The details of the verification can be found
in~\cite{ElectronicYZ}.
 Since $\mathcal{S}_k$'s are related by binomial transforms, so
are sequences in the second family.
\end{proof}

Closed formulae for these sequences can be obtained by the same methods as in Section~\ref{SUBSEC:closedformulae}.

Define the function 
\[
\setlength\arraycolsep{1pt}
H(x)  = 
{}_2 F_1\left(\begin{matrix}\frac{1}{3}& &\frac{2}{3}\\& 1 &
\end{matrix};\frac{27x^2}{(1-2x)^3}\right).
\]

Then  the ordinary generating function of $\mathcal{S}_2$  is equal to
\begin{multline*}
    \frac{(x+1)^2\,(1-8x)}{3x^2\,(1-2x)^2}\left(
H + \frac{12(1+20x-8x^2)(1-2x)}{(x+1)}    H'
    \right) \\ - \frac{(3x^2-x+1)}{(3x^2)}
\end{multline*}
The above formula is equivalent to the following formula for \oeis{
A001181} in the OEIS.

\begin{multline*}
\setlength\arraycolsep{1pt}
  -1 + \frac1{3x^2} \left[ (x-1) + (1-2x) {}_2 F_1\left(\begin{matrix}\frac{-2}{3}& &\frac{2}{3}\\& 1 &
\end{matrix};\frac{27x^2}{(1-2x)^3}\right) \right.\\
\left. - \frac{(8x^3-11x^2-x)}{(1-2x)^2}{}_2 F_1\left(\begin{matrix}\frac{1}{3}& &\frac{2}{3}\\& 2 &
\end{matrix};\frac{27x^2}{(1-2x)^3}\right) \right].
\end{multline*}

A closed formula for the generating function of $\mathcal{S}_k$ follows by substitution in Lemma~\ref{lem:btg}.

The asymptotics is stated in \cite{MR491652} and a detailed account is given in \cite[Theorem~1]{felsner}.


\begin{thebibliography}{10}

\bibitem{Ackerman2006}
Eyal Ackerman, Gill Barequet, and Ron~Y. Pinter.
\newblock A bijection between permutations and floorplans, and its
  applications.
\newblock {\em Discrete Applied Mathematics}, 154(12):1674--1684, 2006.

\bibitem{MR184217}
Glen Baxter.
\newblock On fixed points of the composite of commuting functions.
\newblock {\em Proc. Amer. Math. Soc.}, 15:851--855, 1964.

\bibitem{MR2734180}
Nicolas Bonichon, Mireille Bousquet-M\'{e}lou, and \'{E}ric Fusy.
\newblock Baxter permutations and plane bipolar orientations.
\newblock {\em S\'{e}m. Lothar. Combin.}, 61A:Art. B61Ah, 29, 2009/11.

\bibitem{MR3588720}
Alin Bostan, Fr\'{e}d\'{e}ric Chyzak, Mark van Hoeij, Manuel Kauers, and Lucien
  Pech.
\newblock Hypergeometric expressions for generating functions of walks with
  small steps in the quarter plane.
\newblock {\em European J. Combin.}, 61:242--275, 2017.

\bibitem{Dwork}
Alin Bostan, Pierre Lairez, and Bruno Salvy.
\newblock Creative telescoping for rational functions using the
  {G}riffiths-{D}work method.
\newblock In {\em I{SSAC} 2013---{P}roceedings of the 38th {I}nternational
  {S}ymposium on {S}ymbolic and {A}lgebraic {C}omputation}, pages 93--100. ACM,
  New York, 2013.

\bibitem{BTWZ19}
Alin Bostan, Jordan Tirrell, Bruce~W. Westbury, and Yi~Zhang.
\newblock On sequences associated to the invariant theory of rank two simple
  lie algebras, 2019.

\bibitem{ElectronicYZ}
Alin Bostan, Jordan Tirrell, Bruce~W. Westbury, and Yi~Zhang.
\newblock \href{https://yzhang1616.github.io/ct/ct.html}{Supplementary
  electronic material} to the article ``on sequences associated to invariant
  theory'', 2021.

\bibitem{MR2681853}
Mireille Bousquet-M\'{e}lou and Marni Mishna.
\newblock Walks with small steps in the quarter plane.
\newblock In {\em Algorithmic probability and combinatorics}, volume 520 of
  {\em Contemp. Math.}, pages 1--39. Amer. Math. Soc., Providence, RI, 2010.

\bibitem{Bousquet-Melou2005}
Mireille Bousquet-Mélou and Guoce Xin.
\newblock On partitions avoiding 3-crossings.

\bibitem{Boyadzhiev2018}
Khristo~N. Boyadzhiev.
\newblock {\em Notes on the binomial transform}.
\newblock World Scientific Publishing Co. Pte. Ltd., Hackensack, NJ, 2018.
\newblock Theory and table with appendix on Stirling transform.

\bibitem{Burrill2015}
Sophie Burrill, Julien Courtiel, Eric Fusy, Stephen Melczer, and Marni Mishna.
\newblock Tableau sequences, open diagrams, and {B}axter families.
\newblock {\em European J. Combin.}, 58:144--165, 2016.

\bibitem{MR3470878}
Sophie Burrill, Stephen Melczer, and Marni Mishna.
\newblock A {B}axter class of a different kind, and other bijective results
  using tableau sequences ending with a row shape.
\newblock In {\em Proceedings of {FPSAC} 2015}, Discrete Math. Theor. Comput.
  Sci. Proc., pages 369--380. Assoc. Discrete Math. Theor. Comput. Sci., Nancy,
  2015.

\bibitem{Chen2016}
Shaoshi Chen, Manuel Kauers, and Michael~F. Singer.
\newblock Desingularization of {O}re operators.
\newblock {\em J. Symbolic Comput.}, 74:617--626, 2016.

\bibitem{Chen2007}
William Y.~C. Chen, Eva Y.~P. Deng, Rosena R.~X. Du, Richard~P. Stanley, and
  Catherine~H. Yan.
\newblock Crossings and nestings of matchings and partitions.
\newblock {\em Trans. Amer. Math. Soc.}, 359(4):1555--1575, 2007.

\bibitem{MR491652}
Fan R.~K. Chung, Ronald~L. Graham, Verber~E. Hoggatt, Jr., and Mark Kleiman.
\newblock The number of {B}axter permutations.
\newblock {\em J. Combin. Theory Ser. A}, 24(3):382--394, 1978.

\bibitem{MR3738145}
Julien Courtiel, Eric Fusy, Mathias Lepoutre, and Marni Mishna.
\newblock Bijections for {W}eyl chamber walks ending on an axis, using arc
  diagrams and {S}chnyder woods.
\newblock {\em European J. Combin.}, 69:126--142, 2018.

\bibitem{Cranch2015}
James Cranch.
\newblock Representing and enumerating two-dimensional pasting diagrams, 2015.
\newblock Manuscript, 14 pages.

\bibitem{Dulucq1996}
Serge Dulucq and Oliver Guibert.
\newblock Stack words, standard tableaux and {B}axter permutations.
\newblock {\em Discrete Math.}, 157(1-3):91--106, 1996.

\bibitem{felsner}
Stefan Felsner, Eric Fusy, and Marc Noy.
\newblock {Asymptotic enumeration of orientations}.
\newblock {\em {Discrete Mathematics and Theoretical Computer Science}},
  12(2):249--262, 2010.

\bibitem{FuHa91}
William Fulton and Joe Harris.
\newblock {\em Representation theory}, volume 129 of {\em Graduate Texts in
  Mathematics}.
\newblock Springer-Verlag, New York, 1991.
\newblock A first course, Readings in Mathematics.

\bibitem{Furstenberg67}
Harry Furstenberg.
\newblock Algebraic functions over finite fields.
\newblock {\em J. Algebra}, 7:271--277, 1967.

\bibitem{MR638077}
Robert Gaskell and Robert~T. Sharp.
\newblock Generating functions for {$G\sb{2}$} characters and subgroup
  branching rules.
\newblock {\em J. Math. Phys.}, 22(12):2736--2739, 1981.

\bibitem{MR1092920}
Ira~M. Gessel and Doron Zeilberger.
\newblock Random walk in a {W}eyl chamber.
\newblock {\em Proc. Amer. Math. Soc.}, 115(1):27--31, 1992.

\bibitem{gil19}
Juan~B. Gil and Jordan~O. Tirrell.
\newblock A simple bijection for enhanced, classical, and 2-distant
  $k$-noncrossing partitions.
\newblock {\em Discrete Math.}, 343(6):111705, 5, 2020.

\bibitem{Grabiner1993}
David~J. Grabiner and Peter Magyar.
\newblock Random walks in {W}eyl chambers and the decomposition of tensor
  powers.
\newblock {\em Journal of Algebraic Combinatorics}, 2(3):239--260, 1993.

\bibitem{Hong2002}
Jin Hong and Seok-Jin Kang.
\newblock {\em Introduction to quantum groups and crystal bases}, volume~42 of
  {\em Graduate Studies in Mathematics}.
\newblock American Mathematical Society, Providence, RI, 2002.

\bibitem{King1978}
R.~C. King and A.~H.~A. Qubanchi.
\newblock The evaluation of weight multiplicities of {$G_{2}$}.
\newblock {\em J. Phys. A}, 11(8):1491--1499, 1978.

\bibitem{KoutschanThesis}
Christoph Koutschan.
\newblock {\em Advanced applications of the holonomic systems approach}.
\newblock PhD thesis, RISC, Johannes Kepler University Linz, 2009.

\bibitem{Christoph2010}
Christoph Koutschan.
\newblock
  \href{http://www3.risc.jku.at/research/combinat/software/ergosum/RISC/HolonomicFunctions.html}{\bf
  holonomicfunctions} (user's guide).
\newblock {\em RISC Report Series, University of Linz, Austria}, 2010.
\newblock
  \href{https://www3.risc.jku.at/publications/download/risc_3934/hf.pdf}{Technical
  report 10--01}.

\bibitem{MR1265145}
Greg Kuperberg.
\newblock The quantum {$G_2$} link invariant.
\newblock {\em Internat. J. Math.}, 5(1):61--85, 1994.

\bibitem{Kuperberg1997}
Greg Kuperberg.
\newblock Spiders for rank {$2$} {L}ie algebras.
\newblock {\em Comm. Math. Phys.}, 180(1):109--151, 1996.

\bibitem{Law2012}
Shirley Law and Nathan Reading.
\newblock The hopf algebra of diagonal rectangulations.
\newblock {\em Journal of Combinatorial Theory, Series A}, 119(3):788--824,
  2012.

\bibitem{MR3779614}
Zhicong Lin.
\newblock Restricted inversion sequences and enhanced 3-noncrossing partitions.
\newblock {\em European Journal of Combinatorics}, 70:202--211, 2018.

\bibitem{MR929767}
Leonard~M. Lipshitz.
\newblock The diagonal of a {$D$}-finite power series is {$D$}-finite.
\newblock {\em J. Algebra}, 113(2):373--378, 1988.

\bibitem{Littelmann1994}
Peter Littelmann.
\newblock A littlewood-richardson rule for symmetrizable kac-moody algebras.
\newblock {\em Inventiones Mathematicae}, 116(1-3):329--346, 1994.

\bibitem{Malle2011}
Gunter Malle and Donna Testerman.
\newblock {\em Linear algebraic groups and finite groups of {L}ie type}, volume
  133 of {\em Cambridge Studies in Advanced Mathematics}.
\newblock Cambridge University Press, 2011.

\bibitem{Marberg2012}
Eric Marberg.
\newblock Crossings and nestings in colored set partitions.

\bibitem{Polya22}
G.~P\'olya.
\newblock Sur les s\'eries enti\`eres, dont la somme est une fonction
  alg\'ebrique.
\newblock {\em Enseignement Math.}, 22:38--47, 1921/1922.

\bibitem{Richter2012}
David~A. Richter.
\newblock Splints of classical root systems.
\newblock {\em Journal of Geometry}, 103(1):103--117, 2012.

\bibitem{Scherer21}
Robert Scherer.
\newblock {A Criterion for Sharpness in Tree Enumeration and the Asymptotic
  Number of Triangulations in {K}uperberg's {G2} Spider}.
\newblock {\em S\'{e}m. Lothar. Combin.}, 85B:Art. 72, 13 pp., 2021.
\newblock Proceedings of {FPSAC}'21.

\bibitem{Stanley1999}
Richard~P. Stanley.
\newblock {\em Enumerative combinatorics. {V}ol. 2}, volume~62 of {\em
  Cambridge Studies in Advanced Mathematics}.
\newblock Cambridge University Press, 1999.
\newblock With a foreword by Gian-Carlo Rota and appendix 1 by Sergey Fomin.

\bibitem{Westbury2005}
Bruce~W. Westbury.
\newblock Enumeration of non-positive planar trivalent graphs.
\newblock {\em J. Algebraic Combin.}, 25(4):357--373, 2007.

\bibitem{MR3168516}
Sherry H.~F. Yan.
\newblock Ascent sequences and 3-nonnesting set partitions.
\newblock {\em European J. Combin.}, 39:80--94, 2014.

\bibitem{Yan2018}
Sherry H.~F. Yan.
\newblock Bijections for inversion sequences, ascent sequences and 3-nonnesting
  set partitions.
\newblock {\em Applied Mathematics and Computation}, 325:24--30, 2018.

\bibitem{Yan2020}
Sherry H.~F. Yan and Yao Yu.
\newblock Pattern-avoiding inversion sequences and open partition diagrams.
\newblock {\em Theoretical Computer Science}, 841:186--197, 2020.

\bibitem{Zeilberger1991}
Doron Zeilberger.
\newblock The method of creative telescoping.
\newblock {\em J. Symbolic Comput.}, 11(3):195--204, 1991.

\end{thebibliography}
\end{document}